\documentclass[12pt]{article}
\textheight 225mm \textwidth 168mm \topmargin -1.0cm
\oddsidemargin 2cm \evensidemargin 2cm \hoffset=-3cm

\usepackage{amsthm}
\usepackage{amsmath,bm}
\usepackage{enumerate}
\usepackage{amssymb}
\usepackage{latexsym}
\usepackage{amsfonts}
\usepackage{color}
\usepackage{secdot}
\usepackage{mathrsfs}
\usepackage{subfigure}
\usepackage{epsfig}
\usepackage{natbib}
\usepackage{rotating} 
\bibliographystyle{harvard}
\newtheorem{theorem}{Theorem}[section]
\newtheorem{setup}{Set-up}[section]
\newtheorem{counterexample}{Counterexample}[section]

\newtheorem{corollary}{Corollary}[section]
\newtheorem{definition}{Definition}[section]
\newtheorem{lemma}{Lemma}[section]
\newtheorem{remark}{Remark}[section]
\newtheorem{proposition}{Proposition}[section]

\usepackage{hyperref}
\usepackage{multirow}
\usepackage{multicol}
\usepackage{booktabs,tabularx}
\newcolumntype{x}{>{\raggedright\arraybackslash}X}
\parindent 0.5 cm
\makeatletter \oddsidemargin  0.95in \evensidemargin 0.95in
\textwidth 11.5cm \topmargin 0.0cm \textheight 19.5cm
\setlength\textheight{8.1in} \setlength\textwidth{6.8in}
\setlength\topmargin{0.0in}
\begin{document}
\title{Ordering results between two finite arithmetic mixture models with multiple-outlier location-scale distributed components}
\author{{\large{Raju {\bf Bhakta}$^{1}$\thanks {Email address: bhakta.r93@gmail.com,~raju\_bhakta.maths@yahoo.com},~Nuria {\bf Torrado}$^{2}$\thanks {Email address (corresponding author): nuria.torrado@uam.es, nuria.torrado@icmat.es},~Sangita {\bf Das}$^{3}$\thanks {Email address: sangitadas118@gmail.com},~and~Suchandan {\bf {Kayal}$^{4}$}\thanks {Email address: kayals@nitrkl.ac.in,~suchandan.kayal@gmail.com}}}}
\maketitle
\noindent{\it$^{1}$Department of Mathematics, Indian Institute of Technology Roorkee, Roorkee-247667, Uttarakhand, India.}\\
{\it$^{2}$Department of Mathematics at Universidad Aut\'{o}noma de Madrid and Institute of Mathematical Sciences, 28049 Madrid, Spain.}\\
{\it$^{3}$Theoretical Statistics and Mathematics Unit, Indian Statistical Institute, Bangalore-560059, India.}\\
{\it$^{4}$Department of Mathematics, National Institute of Technology Rourkela, Rourkela-769008, Odisha, India.}

\begin{center}
\noindent{\bf Abstract}
\end{center}
In this article, we introduce finite mixture models (FMMs) renowned for capturing population heterogeneity. 
Our focus lies in establishing stochastic comparisons between two arithmetic (finite) mixture models, employing 
the vector majorization concept in the context of various univariate orders of magnitude, transform, and variability.
These comparisons are conducted within the framework of multiple-outlier location-scale models. Specifically, 
we derive sufficient conditions for comparing two finite arithmetic mixture models with components distributed in a
multiple-outlier location-scale model.  \\

\noindent{\bf Keywords:}
Finite mixture model (FMM); univariate stochastic orders; majorization; location-scale model
\\
\\
{\bf Mathematics Subject Classification:} 60E15, 90B25.

\section{Introduction}
The study of homogeneous populations with single-component distributions has been a focal point for researchers for a long period. 
However, real-world populations are rarely homogeneous; rather, they encompass data from diverse and heterogeneous subpopulations. 
The first attempt to model such heterogeneous data using finite mixture models (FMMs) was undertaken by \cite{newcomb1886generalized} and 
\cite{pearson1894contributions}. Specifically, \cite{newcomb1886generalized} employed normal mixtures to model outliers, while 
\cite{pearson1894contributions} proposed parameter estimates for a finite mixture of two normal distributions.
Recently, a new family of bivariate distributions has been proposed by \cite{Slovaca23}, comprising a mixture of independent Gamma random variables and the bivariate generalized Lindley distribution.
In various fields of science and technology, FMMs have proven to be useful in current applications. A recent study by \cite{AMM23-Pal} employed a mixture model to analyze data involving dependent competing risks, with a specific focus on its application in the treatment of diabetic retinopathy. Another application is for components produced in different batches, which inevitably exhibit heterogeneity due to environmental variations, material differences, machinery factors, and human errors. As a result, the mass-produced components constitute a heterogeneous population consisting of distinct subpopulations. Therefore, components within a randomly selected batch can be attributed to these subpopulations with certain probabilities. In this context, lifetime data originating from a finite number of different subpopulations can be effectively modeled using FMMs.
For more comprehensive insights, refer to \cite{finkelstein2008failure} 
and \cite{cha2013failure}.

Various researchers have previously delved into the examination of stochastic comparisons for mixture models. For example, \cite{navarro2016stochastic} established sufficient conditions for comparing the hazard rate order and the likelihood ratio order of generalized mixtures. 
Meanwhile, \cite{amini2017stochastic} employed various orders, including hazard rate, reversed hazard rate, likelihood ratio, mean residual lifetime, and mean inactivity time, to investigate stochastic comparisons between two classical FMMs. These analyses were conducted under the condition that the baseline distributions are arranged in accordance with certain stochastic ordering, such as $F_1\geq_{hr}\cdots\geq_{hr}F_n$.
Under similar conditions of stochastic ordering among the baseline distributions, \cite{navarro2017stochastic} identified both necessary and sufficient conditions for stochastic comparisons of generalized distorted distributions. Additionally, they illustrated the applicability of these findings in comparing FMMs with component-wise order constraints, such as $F_1\geq_{st}\cdots\geq_{st}F_n$.
 \cite{hazra2018stochastic} employed multivariate chain majorization to establish stochastic comparison results for two FMMs with random variables following the proportional hazard rate (PHR), proportional reversed hazard rate (PRH), or accelerated lifetime (AL) models.
\cite{asadi2019alpha} delved into the failure rate properties of a model termed the alpha-mixture model ($\alpha$-MM), which combines elements of the arithmetic mixture model and the mixture of failure rate model.
In a related vein, \cite{barmalzan2021stochastic} investigated stochastic comparisons between two $\alpha$-FMMs with respect to various stochastic orders. 
\cite{nadeb2020new} explored comparisons for finite mixture models (FMMs) within the framework of the usual stochastic order, considering subpopulations that follow the PHR,  PRH model, or the scale models.
\cite{shojaee2022stochastic} investigated two $n$-component generalized $\alpha$-finite mixture models ($\alpha$-FMMs) with a shared mixing proportion vector, considering two scenarios: one where $\alpha_i\leq 0$ and another where $0<\alpha_i<1$.
\cite{barmalzan2022orderings} conducted a study that explores various stochastic orders of magnitude among FMMs for location-scale ($\mathcal{LS}$) families of distributed components. This investigation is carried out under chain majorization conditions applied to the matrix of parameters.
\cite{kayal2023some} obtained some ordering results between two FMMs considering general parametric families of distributions. Mainly, the authors established sufficient conditions for usual stochastic order based on $p$-larger order and reciprocally majorization order.
Moreover, \cite{bhakta2024stochastic} considered similar general parametric families of distributions as in \cite{kayal2023some}, and then examined various ordering results with respect to usual stochastic order, hazard rate order, and reversed hazard rate order between two FMMs. 
Recently, \cite{panja2024comparisons} derived stochastic results for coherent systems by incorporating component lifetimes modeled under the proportional odds (PO) framework, allowing for any lifetime distribution to act as the baseline distribution.


In this paper, inspired by the research of \cite{barmalzan2022orderings}, we conduct a comparative analysis of two FMMs, considering various univariate orders of magnitude, transform, and variability. The scenario involves mixing components following multiple-outlier $\mathcal{LS}$ families of distributions.
In particular, 
a random variable $X$ is considered to follow a $\mathcal{LS}$ family of distributions if its cumulative distribution function (cdf) is expressed as
\begin{eqnarray}\label{eq_ls}
F_X(x)\equiv F(x;\sigma,\lambda)=F\left(\frac{x-\sigma}{\lambda}\right),~~x>\sigma,~\lambda>0, 
\end{eqnarray}
where $\sigma\in \mathbb{R}$ is referred to as the location parameter, and $\lambda$ is termed the scale parameter.
Given our consideration of nonnegative independent random variables in this study, the support of the baseline cdf $F$ spans $[0,\infty)$. Consequently, the location parameter $\sigma$ must be greater than or equal to $0$.
It is worth mentioning that introducing a location parameter greater than zero can be interpreted in several ways.  
For instance, in reliability and life testing studies, the failure of a unit does not typically startat time $t=0$; instead, 
it often begins at a specific time, such as $t=\sigma>0$. In supply chain management studies, a non-zero threshold value is employed 
to determine when the lead time for a product starts. Additionally, in the context of insurance, the initiation of a claim under a 
health insurance policy does not happen immediately after policy activation. Instead, the claim is often initiated after a specific duration, like one year.

The article is structured as follows:
In Section 2, we introduce fundamental concepts, definitions, and crucial lemmas that will serve as the foundation for the subsequent sections.
Section 3 presents essential properties of finite mixture models (FMMs) adhering to multiple-outlier $\mathcal{LS}$ families of distributions.
Section 4 outlines the main results concerning stochastic comparisons, focusing on various univariate orders of magnitude, transform, and variability.
Lastly, in Section 5, we provide some concluding remarks.

Throughout the paper, we refer to a function $g:\mathbb{R}\rightarrow\mathbb{R}$ as increasing (decreasing) if and only if $g(x)\leq~(\geq)~g(y)$ holds true for all $x\leq y$. Here, the term ``increasing" and ``decreasing" refer to as ``nondecreasing" and ``nonincreasing", respectively. Also, ``$\stackrel{sign}{=}$" is used to indicate that the signs on both sides of an equality are the same. To represent the partial derivative of 
a function $\mathcal{G}:\mathbb{R}_n^+\rightarrow\mathbb{R}$
 with respect to its $k$th component, we use the notation ``$\mathcal{G}_{(k)}(\boldsymbol{x})=\partial \mathcal{G}(\boldsymbol{x})/\partial x_k$", where $k= 1,\ldots,n$. For any differentiable function $s:\mathbb{R}\rightarrow\mathbb{R}$, we write $s^{\prime}(t)$ to represent the first order derivative of $s(t)$ with respect to $t$. Denote $\mathbb{R}=(-\infty,+\infty)$, $\mathbb{R}^+=[0,+\infty)$, $\mathbb{R}_n=(-\infty,+\infty)^n$, and $\mathbb{R}_n^+=[0,+\infty)^n$.

\section{Definitions and some prerequisites}
In this section, we provide some fundamental definitions 
and useful lemmas that play a crucial role in the subsequent sections. Consider two continuous nonnegative random variables, $X$ and $Y$, each characterized by probability density functions $f_X$ and $f_Y$, cumulative distribution functions $F_X$ and $F_Y$, quantile functions $F^{-1}_X$ and $F^{-1}_Y$, survival functions $\bar{F}_X$ and $\bar{F}_Y$, hazard rate functions $h_X$ and $h_Y$, as well as reversed hazard rate functions $\tilde{h}_X$ and $\tilde{h}_Y$. The following definitions will be instrumental in our derivations.
Firstly, we revisit the definitions of certain stochastic orders of magnitude that will be examined in Section 4 for FMMs following multiple-outlier $\mathcal{LS}$ families of distributions.

\begin{definition}
A random variable $X$ is said to be smaller than $Y$ in the sense of
\begin{itemize}
	\item[i)] usual stochastic order (denoted as $X\leq_{st}Y$) if $\bar{F}_X(x)\leq\bar{F}_Y(x)$ for all $x\in\mathbb{R}^+$;
	
	\item[ii)] hazard rate order (denoted as $X\leq_{hr}Y$) if $\bar{F}_Y(x)/\bar{F}_X(x)$ is increasing in $x$, for all $x\in\mathbb{R}^+$ or equivalently $h_X(x)\geq h_Y(x)$, for all $x\in\mathbb{R}^+$;   
	
	\item[iii)] reversed hazard rate order (denoted as $X\leq_{rh}Y$) if $F_Y(x)/F_X(x)$ is increasing in $x$, for all $x\in\mathbb{R}^+$ or equivalently $\tilde{h}_X(x)\leq\tilde{h}_Y(x)$, for all $x\in\mathbb{R}^+$;    
	
	\item[iv)] likelihood ratio order (denoted as $X\leq_{lr}Y$) if $f_Y(x)/f_X(x)$ is increasing in $x$, for all $x\in\mathbb{R}^+$.    
\end{itemize}
\end{definition}  

It is important to note that the following chains of implications hold:
 $X\leq_{lr}Y\Rightarrow X\leq_{hr}Y \Rightarrow X\leq_{st}Y$ and $X\leq_{lr}Y\Rightarrow X\leq_{rh}Y \Rightarrow X\leq_{st}Y$.  
Secondly, we recall some transform and variability orders that are used in this article.

\begin{definition}
	A random variable $X$ is said to be smaller than $Y$ in the sense of
	\begin{itemize}	
		
		\item[i)] star order (denoted as $X\leq_{*}Y$) if $F^{-1}_Y(F_X(x))/x$ is increasing in $x$, for all $x\in\mathbb{R}^+$;  
		
		\item[ii)] Lorenz order (denoted as $X\leq_{Lorenz}Y$) if $\frac{1}{E(X)}\int_{t}^{1}F^{-1}_X(x)dx\leq\frac{1}{E(Y)}\int_{t}^{1}F^{-1}_Y(x)dx$, for all $t\in[0,1]$ and for which the expectations are exist;
		
		\item[iii)] dispersive order (denoted as $X\leq_{disp}Y$) if $F^{-1}(\beta)-F^{-1}(\alpha)\leq F_{Y}^{-1}(\beta)-F_{Y}^{-1}(\alpha)$, for all
			$0<\alpha\leq\beta<1$ and $\forall~x\in\mathbb{R}^+$;
		
		\item[iv)] right spread order (denoted as $X\leq_{RS}Y$) if $\int_{F^{-1}(p)}^{\infty}\bar{F}(x)dx\leq\int_{F_{Y}^{-1}(p)}^{\infty}\bar{F_{Y}}(x)dx$ for all $0\leq p\leq 1$. 
%
	\end{itemize}
\end{definition}  

It is well known that $X\leq_{*}Y\Rightarrow X\leq_{Lorenz}Y$. 
For a comprehensive exploration of various stochastic orderings, refer to \cite{shaked2007stochastic}. 
Next, we recall some well known proportional
ageing notions. 

\begin{definition}
	A random variable $X$, or analogously, its distribution function $ F$ is   
	
	\begin{itemize}
		\item[i)] increasing (decreasing) proportional failure rate, denoted by IPFR
		(DPFR),  if and only if, $xh(x)$ is  increasing (decreasing) for all $x$.
		
		\item[i)] increasing (decreasing) proportional reversed failure  rate,
		denoted by IPRFR (DPRFR),  if and only if, $x\tilde{h}(x)$ is increasing (decreasing)
		for all $x$.
		
		\item[ii)] increasing (decreasing) proportional likelihood ratio,  denoted by
		IPLR (DPLR),  if and only if, $-xf^{\prime }(x)/f(x)$ is increasing
		(decreasing) for all $x$. 
	\end{itemize}
\end{definition}

The monotonicity of these functions
and their relationship with other notions of ageing have been studied in \cite{oliveira2015proportional}.
In particular, in their Theorem 5.1(a), they proved that if $X$ is a random variable with
a decreasing PFR function, then $X$ also has a decreasing PRFR function.
Moreover, they showed that if $X$ is DPLR, then both the PFR and
PRFR functions are also decreasing.

On the other hand, 
the theory of majorization orders find diverse applications across different domains of probability and statistics.
Next, we delve into the concept of majorization along with the associated order structures.

\begin{definition}\label{definition2.2}
	Consider $\boldsymbol{u}=(u_1,\ldots,u_n)$ and $\boldsymbol{v}=(v_1,\ldots,v_n)$ be two real vectors coming from $\mathbb{R}_n$. Further, assume that $u_{(1)}\leq\cdots\leq u_{(n)}$ and $v_{(1)}\leq\cdots\leq v_{(n)}$ denote the respective increasing arrangements of the components of $\boldsymbol{u}$ and $\boldsymbol{v}$, respectively. 
The vector $\boldsymbol{u}$ is said to be 
\begin{itemize}
	\item[i)] majorized by the vector $\boldsymbol{v}$ (denoted as $\boldsymbol{u}\stackrel{m}{\preccurlyeq}\boldsymbol{v}$) if $\sum_{i=1}^{j}u_{(i)}\geq\sum_{i=1}^{j}v_{(i)}$, for all $j=1,\ldots,n-1$, and $\sum_{i=1}^{n}u_{(i)}=\sum_{i=1}^{n}v_{(i)}$;   
	
	\item[ii)] weakly supermajorized by the vector $\boldsymbol{v}$ (denoted as $\boldsymbol{u}\stackrel{w}{\preccurlyeq}\boldsymbol{v}$) if $\sum_{i=1}^{j}u_{(i)}\geq\sum_{i=1}^{j}v_{(i)}$, for all $j=1,\ldots,n$;    
	
	\item[iii)] weakly submajorized by the vector $\boldsymbol{v}$ (denoted as $\boldsymbol{u}\preccurlyeq_{w}\boldsymbol{v}$) if $\sum_{i=j}^{n}u_{(i)}\leq\sum_{i=j}^{n}v_{(i)}$, for all $j=1,\ldots,n$.   
\end{itemize}
\end{definition}           

It can be verified, based on definition \ref{definition2.2}, that the majorization order implies both weak (super and sub) majorization orders.
Now, we introduce the concept of Schur functions in connection with the majorization order.
\begin{definition}
	A real valued function $\varphi$ defined on a set $\mathcal{A}\subseteq\mathbb{R}_n$ is said to be Schur-convex (Schur-concave) on $\mathcal{A}$ if and only if 
	\begin{eqnarray*}
		\boldsymbol{u}\stackrel{m}{\preccurlyeq}\boldsymbol{v}\Rightarrow\varphi(\boldsymbol{u})\leq(\geq)~\varphi(\boldsymbol{v})~\mbox{for all}~\boldsymbol{u}, \boldsymbol{v}\in\mathcal{A}. 
	\end{eqnarray*}   
\end{definition} 
For more details on majorization, its related orders and their applications, one may refer to \cite{marshall2011inequalities}.
The following notations will be consistently employed throughout the remainder of this paper.
\begin{itemize}
	\item[] $\mathcal{D}_n^+=\{\boldsymbol{u}=(u_1,\ldots,u_n)\in\mathbb{R}_n:u_1\geq\ldots\geq u_n>0\}$; 
	
	\item[] $\mathcal{E}_n^+=\{\boldsymbol{u}=(u_1,\ldots,u_n)\in\mathbb{R}_n:0<u_1\leq\ldots\leq u_n\}$.
\end{itemize}

 Next, we present a lemma illustrating the preservation of the weak submajorization order in the spaces $\mathcal{D}_n^+$ and $\mathcal{E}_n^+$ (refer to Theorem 1(i) in \cite{haidari2019comparisons}).
\begin{lemma}\label{lemma2.1}
Let $\varphi:\mathcal{D}_n^+ (\mathcal{E}_n^+)\to \mathbb{R}$ be a continuous function on $\mathcal{D}_n^+ (\mathcal{E}_n^+)$
and continuously
differentiable on the interior of $\mathcal{D}_n^+ (\mathcal{E}_n^+)$.
Then, 
\begin{itemize}	
	\item[i)] 
	$\boldsymbol{u}\preccurlyeq_{w}\boldsymbol{v}$ implies 
	$\varphi_{(k)}(\boldsymbol{u})\geq\varphi_{(k)}(\boldsymbol{v})$
	if and only if,  $\varphi_{(k)}$
	is a non-negative
	increasing (non-negative decreasing) function in $k\in\{1, \ldots, n\}$ on  $\mathcal{D}_n^+ (\mathcal{E}_n^+)$.
	
		\item[ii)] $\boldsymbol{u}\stackrel{w}{\preccurlyeq}\boldsymbol{v}$ implies 
	$\varphi_{(k)}(\boldsymbol{u})\leq\varphi_{(k)}(\boldsymbol{v})$
	if and only if,  $\varphi_{(k)}$
	is a non-positive
	decreasing (non-positive increasing) function in $k\in\{1, \ldots, n\}$ on  $\mathcal{D}_n^+ (\mathcal{E}_n^+)$.
	
\end{itemize}
\end{lemma}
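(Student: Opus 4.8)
The statement to be proven is Lemma~\ref{lemma2.1}, which characterizes when the partial derivatives of a differentiable function $\varphi$ preserve (reverse) the weak submajorization and weak supermajorization orders on the ordered cones $\mathcal{D}_n^+$ and $\mathcal{E}_n^+$. Since the excerpt attributes this to Theorem~1(i) of \cite{haidari2019comparisons}, the plan is to reconstruct the standard majorization-theoretic argument. I would first record the elementary fact that on $\mathcal{D}_n^+$ the vector $\boldsymbol u$ is obtained from $\boldsymbol v$ (when $\boldsymbol u \preccurlyeq_w \boldsymbol v$) through a finite sequence of elementary transformations: a Robin-Hood-type transfer that moves mass between two coordinates while staying inside the cone, together with a coordinatewise decrease (the ``slack'' distinguishing weak submajorization from majorization). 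This reduces the implication ``$\boldsymbol u \preccurlyeq_w \boldsymbol v \Rightarrow \varphi_{(k)}(\boldsymbol u) \ge \varphi_{(k)}(\boldsymbol v)$'' to checking the effect of each elementary step on the $k$th partial derivative.

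For the ``if'' direction, suppose $\varphi_{(k)}$ is non-negative and increasing in the index $k$ on $\mathcal{D}_n^+$ (the $\mathcal{E}_n^+$ case is symmetric, with ``decreasing in $k$''). Along a path from $\boldsymbol v$ down to $\boldsymbol u$ inside the cone, I would write $\varphi_{(k)}(\boldsymbol u) - \varphi_{(k)}(\boldsymbol v)$ as an integral of second partials $\varphi_{(k)(j)}$ against the increments of the path, and use the ordering of the indices together with non-negativity/monotonicity in $k$ to sign each contribution. Concretely, a transfer that moves a little mass from coordinate $i$ to coordinate $j>i$ contributes something controlled by $\varphi_{(k)(i)} - \varphi_{(k)(j)}$, and the hypothesis that $\varphi_{(k)}$ is increasing in its argument-index forces the right sign; the pure coordinatewise-decrease part is handled by non-negativity of $\varphi_{(k)}$ (equivalently, of the relevant first partials). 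Summing over the finitely many elementary steps gives $\varphi_{(k)}(\boldsymbol u) \ge \varphi_{(k)}(\boldsymbol v)$. For part (ii) the same scheme applies with the signs reversed: weak supermajorization corresponds to transfers in the opposite direction plus a coordinatewise \emph{increase}, and ``non-positive decreasing'' is exactly the condition that makes every contribution $\le 0$.

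For the ``only if'' direction I would argue by contraposition, constructing explicit pairs $\boldsymbol u \preccurlyeq_w \boldsymbol v$ in $\mathcal{D}_n^+$ that detect a violation. If $\varphi_{(k)}$ fails to be non-negative at some interior point, perturb that point downward in a single coordinate to get $\boldsymbol u \preccurlyeq_w \boldsymbol v$ with $\varphi_{(k)}(\boldsymbol u) < \varphi_{(k)}(\boldsymbol v)$; if instead $\varphi_{(k)}$ fails to be increasing in $k$, i.e. $\varphi_{(k)} < \varphi_{(k+1)}$ somewhere (or the analogous failure for $\mathcal{E}_n^+$), use a small Robin-Hood transfer between coordinates $k$ and $k+1$ to again violate the conclusion. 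Continuity of $\varphi_{(k)}$ on the closed cone and differentiability on its interior ensure these local constructions stay admissible.

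The main obstacle is the bookkeeping in the ``if'' direction: one must verify that an arbitrary pair with $\boldsymbol u \preccurlyeq_w \boldsymbol v$ inside $\mathcal{D}_n^+$ (respectively $\mathcal{E}_n^+$) really can be joined by a path consisting only of admissible transfers between \emph{adjacent-in-order} coordinates plus a monotone coordinatewise shift, all without leaving the cone, and that the second-order terms aggregate with a single sign. Keeping the ordering constraint $u_1 \ge \cdots \ge u_n > 0$ (or its reverse) intact along the whole deformation — so that the hypothesis ``$\varphi_{(k)}$ increasing in $k$ on $\mathcal{D}_n^+$'' can legitimately be invoked at every intermediate point — is the delicate step; everything else is a sign chase. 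Since this is precisely Theorem~1(i) of \cite{haidari2019comparisons}, I would ultimately cite that reference for the full details rather than reproduce the calculation here.
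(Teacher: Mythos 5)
First, a point of comparison: the paper itself offers no proof of Lemma~\ref{lemma2.1}; it is imported verbatim as Theorem~1(i) of \cite{haidari2019comparisons}, so your decision to ultimately defer to that reference is consistent with what the paper does. The problem is with the sketch you give before deferring. You take the conclusion literally as a statement about the partial derivatives, $\varphi_{(k)}(\boldsymbol u)\geq\varphi_{(k)}(\boldsymbol v)$, and then try to sign the change of $\varphi_{(k)}$ along a path of Robin--Hood transfers plus a coordinatewise shift. That step does not close: the contribution of a transfer to the change in $\varphi_{(k)}$ is governed by second partials $\varphi_{(k)(i)}-\varphi_{(k)(j)}$, and nothing in the hypothesis controls these. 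The hypothesis ``$\varphi_{(k)}$ is non-negative and increasing (decreasing) in $k$'' compares $\varphi_{(k)}$ and $\varphi_{(j)}$ \emph{at the same point}; it says nothing about how $\varphi_{(k)}$ varies as the argument moves inside the cone, which is exactly what your integral-of-second-partials bookkeeping would need. So, as written, the ``if'' direction is a genuine gap, not just omitted detail.

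The way out is to run the same decomposition one order of differentiation lower. The actual content of the cited result is that the sign and the monotonicity-in-$k$ of the \emph{first} partials characterize $\varphi$ itself as increasing and Schur-convex (respectively Schur-concave) on $\mathcal{D}_n^+$ or $\mathcal{E}_n^+$, so that $\boldsymbol u\preccurlyeq_{w}\boldsymbol v$ (or $\boldsymbol u\stackrel{w}{\preccurlyeq}\boldsymbol v$) yields an inequality between $\varphi(\boldsymbol u)$ and $\varphi(\boldsymbol v)$ --- and this is precisely how the lemma is used later: in the proof of Theorem~\ref{theorem3.1} the verified conditions on $\partial\bar F_{U_{\boldsymbol\lambda}}(x)/\partial\lambda_k$ are used to order the survival functions, not their derivatives. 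At that level your argument is the standard one (a transfer from coordinate $i$ to $j$ contributes a term signed by $\varphi_{(i)}-\varphi_{(j)}$, the slack by the sign of $\varphi_{(k)}$, and the delicate point is indeed keeping the path inside the ordered cone), and your contraposition for the ``only if'' direction is fine in the same first-order form. So either restate the conclusion as being about $\varphi(\boldsymbol u)$ versus $\varphi(\boldsymbol v)$ (the literal displayed conclusion with $\varphi_{(k)}$ appears to be a misprint carried over from the source) and rerun your sign chase with first partials, or add hypotheses on the second partials; under the stated assumptions the claim about $\varphi_{(k)}(\boldsymbol u)$ versus $\varphi_{(k)}(\boldsymbol v)$ cannot be derived by your path argument.
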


The following two lemmas due to \cite{saunders1978quantiles} will be useful to establish the star and dispersive order between the two MRVs.

\begin{lemma}\label{lemma3.1}
	Let $\{F_{\lambda}\mid\lambda\in\mathbb{R}^+\}$ be a class of distribution functions, such that $F_{\lambda}$ is supported on some interval $(a,b)\subseteq(0,\infty)$ and has density $f_{\lambda}$ which does not vanish on any subinterval of $(a,b)$. Then,
	\begin{itemize}
		\item[i)] $F_{\lambda}\leq_{*} F_{\lambda^*}$ for $\lambda\leq\lambda^*$
		if, and only if, $\frac{F^{\prime}_{\lambda}(x)}{xf_{\lambda}(x)}$
		is decreasing in $x$,
		\item[ii)] $F_{\lambda}\leq_{disp} F_{\lambda^*}$ for $\lambda\leq\lambda^*$
		if, and only if, $\frac{F^{\prime}_{\lambda}(x)}{f_{\lambda}(x)}$
		is decreasing in $x$,
	\end{itemize}
	where $F^{\prime}_{\lambda}$ is the derivative of $F_{\lambda}$ with respect to $\lambda$. 
\end{lemma}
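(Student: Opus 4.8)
The statement is a classical criterion due to \cite{saunders1978quantiles}, so the natural route is to reproduce their argument. The plan is to work with the transformation $\psi_\lambda(x) = F_{\lambda^*}^{-1}(F_\lambda(x))$, which maps the support of $F_\lambda$ onto the support of $F_{\lambda^*}$ and carries the distribution of a random variable with cdf $F_\lambda$ to one with cdf $F_{\lambda^*}$. The star order $F_\lambda \leq_* F_{\lambda^*}$ is, by definition, the statement that $\psi_\lambda(x)/x$ is increasing in $x$, while the dispersive order $F_\lambda \leq_{disp} F_{\lambda^*}$ is equivalent to $\psi_\lambda(x) - x$ being increasing in $x$ (this reformulation of Definition 2.3(iii) via the quantile map is standard and can be cited from \cite{shaked2007stochastic}). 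So in both cases the goal is reduced to a monotonicity statement about $\psi_\lambda$.

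First I would record that, since $f_\lambda$ does not vanish on any subinterval, $F_\lambda$ is strictly increasing and $\psi_\lambda$ is differentiable with
\begin{eqnarray*}
\psi_\lambda'(x) = \frac{f_\lambda(x)}{f_{\lambda^*}(\psi_\lambda(x))}.
\end{eqnarray*}
Next, I would treat $\lambda$ as a continuous parameter and differentiate the identity $F_{\lambda^*}(\psi_\lambda(x)) = F_\lambda(x)$ — or rather the one-parameter family obtained by letting the upper index vary — with respect to the scale parameter. Differentiating $F_\mu(\phi_\mu(x)) = F_\lambda(x)$ in $\mu$ at a generic $\mu$ gives
\begin{eqnarray*}
F'_\mu(\phi_\mu(x)) + f_\mu(\phi_\mu(x))\,\frac{\partial}{\partial \mu}\phi_\mu(x) = 0,
\end{eqnarray*}
so that $\frac{\partial}{\partial\mu}\phi_\mu(x) = -F'_\mu(\phi_\mu(x))/f_\mu(\phi_\mu(x))$, where $F'_\mu$ denotes the $\mu$-derivative of $F_\mu$. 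For the star order one wants $\phi_\mu(x)/x$ increasing in $x$ for each fixed pair; monotonicity in $x$ of $\phi_\mu(x)/x$ follows once one shows it is increasing in $\mu$ after the substitution $y=\phi_\mu(x)$, i.e. once one shows $\frac{\partial}{\partial\mu}\big(\phi_\mu(x)/x\big)$ has a sign governed by the monotonicity of $y\mapsto F'_\mu(y)/(y f_\mu(y))$. Concretely, $\frac{\partial}{\partial\mu}\log(\phi_\mu(x)/x) = -F'_\mu(\phi_\mu(x))/(\phi_\mu(x) f_\mu(\phi_\mu(x)))$, and comparing this quantity at two values $x_1<x_2$ (hence $\phi_\mu(x_1)<\phi_\mu(x_2)$) reduces exactly to the hypothesis that $F'_\mu(y)/(y f_\mu(y))$ is decreasing in $y$. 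Integrating in $\mu$ from $\lambda$ to $\lambda^*$ then yields that $\phi_\lambda(x)/x = F_{\lambda^*}^{-1}(F_\lambda(x))/x$ is increasing in $x$, which is the star order. The dispersive part is entirely parallel: replace $\log(\phi_\mu(x)/x)$ by $\phi_\mu(x)-x$, note $\frac{\partial}{\partial\mu}(\phi_\mu(x)-x) = -F'_\mu(\phi_\mu(x))/f_\mu(\phi_\mu(x))$, and use that $F'_\mu(y)/f_\mu(y)$ decreasing in $y$ forces this to be decreasing in $x$, so $\phi_\mu(x)-x$ is monotone in the right direction; integrating in $\mu$ gives $F_{\lambda^*}^{-1}(F_\lambda(x)) - x$ increasing in $x$. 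For the converse directions one runs the same differentiation identities backward: if the star (resp. dispersive) order holds for all $\lambda\leq\lambda^*$, differentiate at $\lambda^*\downarrow\lambda$ to recover the monotonicity of $F'_\lambda(y)/(y f_\lambda(y))$ (resp. $F'_\lambda(y)/f_\lambda(y)$).

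The main obstacle is the regularity bookkeeping rather than any deep idea: one must justify differentiating $\phi_\mu$ in the parameter $\mu$ (interchanging the inversion and differentiation), which is where the hypothesis that $f_\lambda$ does not vanish on subintervals of $(a,b)$ is essential — it guarantees $F_\lambda$ is a genuine homeomorphism onto its image and that $\phi_\mu$ is $C^1$ jointly. I would either invoke the implicit function theorem on $F_\mu(y)=F_\lambda(x)$ or, to avoid smoothness-in-$\mu$ assumptions altogether, give the cleaner finite-difference version: fix $\lambda<\lambda^*$, set $\phi=F_{\lambda^*}^{-1}\circ F_\lambda$, and show directly from $\phi(x_2)/\phi(x_1) \gtrless x_2/x_1$ being equivalent (via the mean-value-type identity $\int_{x_1}^{x_2}$ of the logarithmic derivative) to a sign condition that telescopes into the stated monotonicity of $F'_\mu/(y f_\mu)$ integrated over $\mu\in[\lambda,\lambda^*]$. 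Since the lemma is quoted verbatim from \cite{saunders1978quantiles}, it is also legitimate simply to cite it; but the sketch above is the argument I would write if a self-contained proof were wanted.
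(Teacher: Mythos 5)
Your sketch is essentially the classical Saunders--Moran quantile-flow argument: differentiate $F_\mu(\phi_\mu(x))=F_\lambda(x)$ in the parameter to get $\partial_\mu\phi_\mu(x)=-F'_\mu(\phi_\mu(x))/f_\mu(\phi_\mu(x))$, compare at $x_1<x_2$, and integrate in $\mu$, with the converse obtained by differentiating at $\lambda^*\downarrow\lambda$; this is sound (the only blemish is a sign-phrasing slip in the dispersive part, where $-F'_\mu(\phi_\mu(x))/f_\mu(\phi_\mu(x))$ is \emph{increasing} in $x$ under the hypothesis, which is exactly what the integration step needs, so the argument is unaffected). The paper itself offers no proof of this lemma --- it is quoted verbatim from Saunders and Moran (1978) --- so there is no internal argument to compare against; your reconstruction matches the cited source's standard proof, and simply citing that reference, as the paper does, would also suffice.
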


To conclude this section, we refer back to a lemma from \cite{kochar2010right}, which will be employed later.
\begin{lemma}\label{lemma4.3}
	Let $\{F_{\lambda}\mid\lambda\in\mathbb{R}^+\}$ be a class of distribution functions, such that $F_{\lambda}$ is supported on some interval $(a,b)\subseteq(0,\infty)$. Then, 
	\begin{eqnarray*}
		F_{\lambda}\leq_{RS}F_{\lambda^*},~~~\lambda\leq\lambda^*
	\end{eqnarray*}	
	if and only if
	\begin{eqnarray*}
		\frac{W^{\prime}_\lambda(x)}{\bar{F}_\lambda(x)}~\text{is increasing in}~x, 
	\end{eqnarray*}
	where $W_\lambda=\int_{x}^{\infty}\bar{F}_\lambda(u)du$ and $W^{\prime}_\lambda$ is the derivative of $W_\lambda$ with respect to $\lambda$.   
\end{lemma}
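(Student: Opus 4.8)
The plan is to recast the right spread order as a monotonicity statement in the parameter $\lambda$, and then to identify that statement with the asserted monotonicity in $x$, using the same differentiation-in-the-parameter technique that underlies Lemma~\ref{lemma3.1}.

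\emph{Step 1 (reduction to monotonicity in $\lambda$).} For $p\in(0,1)$ I would put $\psi_\lambda(p):=W_\lambda\big(F_\lambda^{-1}(p)\big)=\int_{F_\lambda^{-1}(p)}^{\infty}\bar{F}_\lambda(u)\,du$. By the definition of $\leq_{RS}$, the statement ``$F_\lambda\leq_{RS}F_{\lambda^*}$ whenever $\lambda\leq\lambda^*$'' is precisely the statement ``$\lambda\mapsto\psi_\lambda(p)$ is increasing on $\mathbb{R}^+$ for every fixed $p\in(0,1)$''. Under the smoothness tacitly assumed of $\{F_\lambda\}$, this in turn is equivalent to
\[
\frac{\partial}{\partial\lambda}\psi_\lambda(p)\geq 0\qquad\text{for all }\lambda\in\mathbb{R}^+\text{ and all }p\in(0,1).
\]

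\emph{Step 2 (the $\lambda$-derivative versus the $x$-derivative).} Write $q=F_\lambda^{-1}(p)$; differentiating $F_\lambda(q)=p$ in $\lambda$ gives $\partial q/\partial\lambda=-F'_\lambda(q)/f_\lambda(q)$, and since $\partial W_\lambda(x)/\partial x=-\bar{F}_\lambda(x)$ the chain rule yields
\[
\frac{\partial}{\partial\lambda}\psi_\lambda(p)=W'_\lambda(q)+\bar{F}_\lambda(q)\,\frac{F'_\lambda(q)}{f_\lambda(q)}.
\]
On the other hand, interchanging the $\lambda$- and $x$-derivatives in $W_\lambda(x)=\int_x^\infty\bar{F}_\lambda(u)\,du$ gives $\partial W'_\lambda(x)/\partial x=-\partial_\lambda\bar{F}_\lambda(x)=F'_\lambda(x)$, so that with $\partial\bar{F}_\lambda(x)/\partial x=-f_\lambda(x)$ one obtains, by the quotient rule,
\[
\frac{d}{dx}\!\left(\frac{W'_\lambda(x)}{\bar{F}_\lambda(x)}\right)=\frac{f_\lambda(x)}{\bar{F}_\lambda(x)^{2}}\left[\,W'_\lambda(x)+\bar{F}_\lambda(x)\,\frac{F'_\lambda(x)}{f_\lambda(x)}\,\right].
\]
Comparing the last two displays shows $\dfrac{d}{dx}\!\big(W'_\lambda(x)/\bar{F}_\lambda(x)\big)=\dfrac{f_\lambda(x)}{\bar{F}_\lambda(x)^{2}}\cdot\dfrac{\partial}{\partial\lambda}\psi_\lambda\big(F_\lambda(x)\big)$.

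\emph{Step 3 (conclusion).} Because $f_\lambda(x)/\bar{F}_\lambda(x)^2>0$ on the support and $q=F_\lambda^{-1}(p)$ traces out all of $(a,b)$ as $p$ ranges over $(0,1)$, the identity from Step~2 shows that, at a fixed $\lambda$, $\partial\psi_\lambda(p)/\partial\lambda\geq0$ for every $p$ if and only if $W'_\lambda(x)/\bar{F}_\lambda(x)$ is increasing in $x$; letting $\lambda$ range over $\mathbb{R}^+$ and combining with Step~1 yields the stated equivalence. I expect the only real difficulty to be the analytic bookkeeping: one must justify differentiating under the integral sign to obtain $\partial_x W'_\lambda=F'_\lambda$, control the behaviour of the ratios near the endpoints $a$ and $b$, and be precise in passing from ``increasing in $\lambda$'' to ``nonnegative $\lambda$-derivative'' — all of which is routine under the regularity conditions implicitly imposed on such parametric families.
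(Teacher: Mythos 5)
The paper does not prove this lemma at all: it is quoted verbatim from \cite{kochar2010right} and used as a black box, so there is no internal proof to compare against. Your derivation is correct and is essentially the standard Saunders--Moran-type argument (the same mechanism that underlies Lemma \ref{lemma3.1}): the identity
$\frac{d}{dx}\bigl(W'_\lambda(x)/\bar F_\lambda(x)\bigr)=\frac{f_\lambda(x)}{\bar F_\lambda(x)^{2}}\,\partial_\lambda\psi_\lambda\bigl(F_\lambda(x)\bigr)$
is verified correctly, and combining it with the characterization of $\leq_{RS}$ through $\psi_\lambda(p)=\int_{F_\lambda^{-1}(p)}^{\infty}\bar F_\lambda(u)\,du$ gives the equivalence. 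The only caveats are the ones you already flag, which are also tacit in the cited source and in the paper's use of the lemma: differentiability of the family in $\lambda$, validity of interchanging $\partial_\lambda$ with the integral defining $W_\lambda$, and a non-vanishing density $f_\lambda>0$ on $(a,b)$ so that the positive factor $f_\lambda/\bar F_\lambda^{2}$ can be divided out (the analogue of the non-vanishing condition stated explicitly in Lemma \ref{lemma3.1}). With those standing assumptions your proof is complete.
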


\section{Basic properties of location-scale finite mixture models}\label{sec_new}

In this section, we  investigate the fundamental properties of FMMs following multiple-outlier $\mathcal{LS}$ families of distributions.
Exploring key characteristics and principles, we lay the foundation for a comprehensive understanding of these models. The discussion encompasses essential concepts that form the basis for subsequent analyses and comparisons within the context of FMMs with $\mathcal{LS}$ distributed components.

Firstly, let us revisit  the definition of finite arithmetic mixture models (FMMs).
Consider a random vector $\boldsymbol{X}=(X_1,\ldots,X_n)$ with $n$ components, where the $i$th component is drawn from the $i$th subpopulation. Suppose there are $n$ homogeneous infinite subpopulations of units, with $X_i$ representing the lifetime of a unit in the $i$th subpopulation, where $i=1,\ldots,n$. The mixture of units drawn from these $n$ subpopulations is represented by the random variable $U_n$.
Let $\bar{F}_i$, $F_i$, and $f_i$ denote the survival function (sf), cumulative distribution function (cdf), and probability density function (pdf) of the $i$th random variable $X_i$, respectively. Then, the sf, cdf, and pdf of the mixture random variable (MRV) $U_n$ are given by:
\begin{eqnarray}\label{eq_mix}
	\bar{F}_{U_n}(x)=\sum\limits_{i=1}^{n}r_i\bar{F}_i(x),
	~F_{U_n}(x)=\sum\limits_{i=1}^{n}r_iF_i(x)
	~\mbox{and}~f_{U_n}(x)=\sum\limits_{i=1}^{n}r_if_i(x),
\end{eqnarray}
where $\boldsymbol{r}=(r_1,\ldots,r_n)$ represents the mixing proportions (weights) with $\sum_{i=1}^{n}r_i=1$ and $r_i\geq0$ for $i\in{1,\ldots,n}$.
Furthermore, we need to revisit the definition of a multiple-outlier model with random variables belonging to the location-scale family.
Consider a random sample $X_1, \ldots, X_{n_1}$ of size $n_1$ from a continuous nonnegative random variable $X^{(1)}\thicksim\mathcal{LS}(F, \sigma_1, \lambda_1)$, and another independent random sample $X_{n_1+1}, \ldots, X_n$ of size $n_2$ from a continuous nonnegative random variable $X^{(2)}\thicksim\mathcal{LS}(F, \sigma_2, \lambda_2)$, where $n_1+n_2=n$.
In other words, the sf, cdf, and pdf of $X^{(i)}$ are given by:
\begin{eqnarray}\label{eq_mo}
	\bar{F}_{i}(x)=\bar{F}\left(\frac{x-\sigma_i}{\lambda_i}\right),
	~F_{i}(x)=F\left(\frac{x-\sigma_i}{\lambda_i}\right)
	~\mbox{and}~f_{i}(x)=\frac{1}{\lambda_i}f\left(\frac{x-\sigma_i}{\lambda_i}\right),
\end{eqnarray}
for $x>\sigma_i$ and $i=1,2$.
Denote by $U$ the MRV representing the finite mixture of $X_1, \ldots, X_n$ with mixing proportions $r_i=r_1$ for $i=1, \ldots, n_1$, and $r_i=r_2$ for $i=n_1+1, \ldots, n$, such that $n_1r_1+n_2r_2=1$.
Combining equations \eqref{eq_mix} and \eqref{eq_mo} allows us to derive the sf, cdf, and pdf characterizing the MRV, but caution is necessary regarding the support of said random variable. For instance, following the definition in \cite{barmalzan2022orderings}, the sf of $U$ would be 
	\[\bar F(x)=n_1 r_1 \bar F \left(\frac{x-\sigma_1}{\lambda_1}\right)+n_2 r_2 \bar F \left(\frac{x-\sigma_2}{\lambda_2}\right),
\quad x\geq\max\{\sigma_1,\sigma_2\},
\]
with $r_1+r_2=1$.
However, this function 
	is not a proper sf. Note that, for $\sigma_1>\sigma_2$, its corresponding pdf is given by
	\[f(x)=\frac{r_1}{\lambda_1}f \left(\frac{x-\sigma_1}{\lambda_1}\right)+
	\frac{r_2}{\lambda_2}f \left(\frac{x-\sigma_2}{\lambda_2}\right),
	\quad x\geq \sigma_1.
	\]
	It is evident that the above function 
	has to integrate to 1.
	However, we get the following
	\begin{eqnarray*}
		\int_{-\infty}^{+\infty}f(x)dx&=&
		\int_{\sigma_1}^{+\infty}f(x)dx=r_1+
		r_2\int_{\sigma_1}^{+\infty}\frac{1}{\lambda_2}f \left(\frac{x-\sigma_2}{\lambda_2}\right)dx\\
		&=&
		1-
		r_2\int_{\sigma_2}^{\sigma_1}\frac{1}{\lambda_2}f \left(\frac{x-\sigma_2}{\lambda_2}\right)dx\neq 1,
	\end{eqnarray*}
	since $\sigma_1>\sigma_2$ and
	\[\int_{\sigma_2}^{+\infty}\frac{1}{\lambda_2}f \left(\frac{x-\sigma_2}{\lambda_2}\right)dx=1.
	\]

Taking into account both the support of the baseline distribution and that of the location-scale model, we obtain, in the following result, a proper  pdf of the MRV $U$.

\begin{proposition}
	Let $(\sigma_1,\sigma_2)\in\mathcal{D}_2^+$ and  $n_1r_1+n_2r_2=1$. The function
\begin{equation}\label{eq_pdf}
f_{U}(x)=\frac{n_1 r_1}{\lambda_1} f\left(\frac{x-\sigma_1}{\lambda_1}\right)I(x>\sigma_1)+
\frac{n_2 r_2}{\lambda_2} f\left(\frac{x-\sigma_2}{\lambda_2}\right)I(x>\sigma_2),
\end{equation}
is a proper pdf  whenever $f$ is a baseline pdf of a nonnegative random variable with unbounded support and  
where $I(x>\sigma_i)$ is an indicator function  which has the form as:
\begin{equation*}
I(x>\sigma_i)=\left\{ 
\begin{array}{ll}
1&\textup{if } x>\sigma_i \\
0&\textup{if } x\leq\sigma_i
\end{array}
\right. 
\end{equation*}
for $i=1,2$.
\end{proposition}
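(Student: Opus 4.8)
The plan is to verify directly that the function $f_U$ in \eqref{eq_pdf} is nonnegative and integrates to $1$; nonnegativity is immediate since $f\geq 0$, $\lambda_i>0$, $r_i\geq 0$, and the indicator functions are nonnegative, so the crux is the integral computation. First I would split the real line according to the supports. Since $(\sigma_1,\sigma_2)\in\mathcal{D}_2^+$ we have $\sigma_1\geq\sigma_2>0$, so $I(x>\sigma_1)$ is supported on $(\sigma_1,\infty)$ and $I(x>\sigma_2)$ on $(\sigma_2,\infty)$, and both terms vanish for $x\leq\sigma_2$.

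Next I would integrate term by term over $\mathbb{R}$. For the first term, the substitution $u=(x-\sigma_1)/\lambda_1$ gives
\[
\int_{-\infty}^{\infty}\frac{n_1 r_1}{\lambda_1} f\left(\frac{x-\sigma_1}{\lambda_1}\right)I(x>\sigma_1)\,dx
=n_1 r_1\int_{0}^{\infty} f(u)\,du = n_1 r_1,
\]
using that $f$ is the pdf of a nonnegative random variable, hence $\int_0^\infty f = 1$. Similarly, the substitution $v=(x-\sigma_2)/\lambda_2$ on the second term yields $n_2 r_2$. Adding the two contributions and invoking the normalization hypothesis $n_1 r_1 + n_2 r_2 = 1$ gives $\int_{-\infty}^{\infty} f_U(x)\,dx = 1$, which completes the verification that $f_U$ is a proper pdf.

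There is essentially no serious obstacle here; the statement is a bookkeeping correction of the naive mixture formula, and the only subtlety worth emphasizing in the write-up is \emph{why} the unbounded-support assumption on $f$ matters. It guarantees that each location-scale shifted density, when restricted to its own support $(\sigma_i,\infty)$, still captures the full mass $1$ of $f$ — unlike the flawed construction discussed before the proposition, where the term with the smaller location parameter was being integrated only over $(\sigma_1,\infty)$ rather than $(\sigma_2,\infty)$, thereby losing the mass $r_2\int_{\sigma_2}^{\sigma_1}\lambda_2^{-1} f((x-\sigma_2)/\lambda_2)\,dx$. In the proof I would make the role of the indicators explicit so that each term is integrated over the correct half-line, and note that the argument is symmetric in the two components, so the ordering $\sigma_1\geq\sigma_2$ is used only to fix notation and is not actually needed for the normalization itself.
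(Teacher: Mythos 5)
Your proposal is correct and follows essentially the same route as the paper: nonnegativity is immediate, and the integral is computed componentwise so that each location-scale term contributes its full mass $n_ir_i$ over its own half-line $(\sigma_i,\infty)$, summing to $n_1r_1+n_2r_2=1$ (the paper merely organizes this by splitting the domain into $(\sigma_2,\sigma_1]$ and $(\sigma_1,\infty)$ before regrouping). Only your closing remark on the role of unbounded support differs in spirit from the paper, whose Counterexample 3.1 illustrates the failure when the bounded-support density formula is evaluated beyond its natural domain, but this aside does not affect the validity of your argument.
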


\begin{proof}
	It is evident that $f_{U}$ is non-negative and 
\begin{eqnarray*}
	\int_{-\infty}^{+\infty}f_{U}(x)dx&=&\int_{\sigma_2}^{\sigma_1}n_2r_2f_2(x)dx+
	\int_{\sigma_1}^{+\infty}\big(n_1 r_1 f_1(x)+n_2 r_2 f_2(x)\big)dx\\
	&=&n_1r_1\int_{\sigma_1}^{+\infty} f_1(x)dx+
	n_2r_2\int_{\sigma_2}^{+\infty} f_2(x)dx=n_1r_1+n_2r_2=1,
\end{eqnarray*}
where
\[f_{i}(x)=\frac{1}{\lambda_i}f\left(\frac{x-\sigma_i}{\lambda_i}\right)I(x>\sigma_i), 
\]
such as $f_{i}$ integrates one,
for $i=1,2$. Thus $f_{U}$
is a proper pdf.
\end{proof}

The following counterexample shows that,
if the baseline random variable has bounded support,
then the function defined in \eqref{eq_pdf} 
may not be a proper pdf. 

\begin{counterexample}\label{countex_01}
Let us	consider the baseline distribution as a power distribution with pdf $f(t;a,b)=\frac{a}{b^{a}}t^{a-1}$, for $0<t<b$ with $a,b>0$. Take $a=3$ and $b=2$. 
Let $n_1=3$, $n_2=2$, $r_1=0.1$, $r_2=0.35$, $\sigma_1=4$, $\sigma_2=2$, $\lambda_1=12$ and $\lambda_2=8$.
From \eqref{eq_pdf}, we get
\begin{eqnarray}\label{cont}\nonumber
	f_{U}(x)&=&\frac{0.3}{12} f\left(\frac{x-4}{12}\right)I(x>4)+
\frac{0.7}{8} f\left(\frac{x-2}{8}\right)I(x>2)\\
&=&\frac{3}{320}\left(\frac{x-4}{12}\right)^{2}I(x>4)+
\frac{21}{640}\left(\frac{x-2}{8}\right)^{2}I(x>2).
\end{eqnarray}
Observe that
\[
\int_{4}^{+\infty}\left(\frac{x-4}{12}\right)^{2}dx=+\infty
\quad\textup{and}\quad
\int_{2}^{+\infty}\left(\frac{x-2}{8}\right)^{2}dx=+\infty.
\]
Therefore, in this particular case, the function $f_{U}(x)$ defined in \eqref{cont} is not a proper pdf.
\end{counterexample}

Finally, from \eqref{eq_pdf}, we derive 
the corresponding cdf for the MRV $U$ which is given by:
\begin{equation}\label{eq_cdfU}
F_{U}(x)=\left\{ 
\begin{array}{ll}
0&\textup{if } x\leq\sigma_2 \\
n_2 r_2  F \left(\frac{x-\sigma_2}{\lambda_2}\right)
&\textup{if } \sigma_2<x\leq\sigma_1\\
n_1 r_1  F \left(\frac{x-\sigma_1}{\lambda_1}\right)+n_2 r_2  F \left(\frac{x-\sigma_2}{\lambda_2}\right)
&\textup{if } x>\sigma_1.
\end{array}
\right. 
\end{equation}
Analogously, we have 
the following sf for the MRV $U$:
\begin{equation}\label{eq_sfU}
\bar F_{U}(x)=\left\{ 
\begin{array}{ll}
1&\textup{if } x\leq\sigma_2 \\
n_1 r_1 +n_2 r_2 \bar F \left(\frac{x-\sigma_2}{\lambda_2}\right)
&\textup{if } \sigma_2<x\leq\sigma_1\\
n_1 r_1 \bar F \left(\frac{x-\sigma_1}{\lambda_1}\right)+n_2 r_2 \bar F \left(\frac{x-\sigma_2}{\lambda_2}\right)
&\textup{if } x>\sigma_1.
\end{array}
\right. 
\end{equation}
	
\section{Stochastic comparisons}\label{sec3}

Our focus here is on stochastically comparing
two FMMs with multiple-outlier location-scale family distributed components. 
To do this, we
 consider two FMMs with $n$ and $n^{*}$ number of random variables respectively. 
For the first FMM, we assume that $n_1$ random variables belong to a specific homogeneous subpopulation, while the remaining $n_2$ random variables come from another homogeneous subpopulation, satisfying $n_1+n_2=n$. Similarly, for the second FMM, we consider that $n_1^{*}$ random variables belong to a particular homogeneous subpopulation, and the remaining $n_2^{*}$ random variables are from another homogeneous subpopulation, with $n_1^{*}+n_2^{*}=n^{*}$.

In this section, we present the main results related to various stochastic comparisons. Specifically, we divide this section into two subsections: one for orders of magnitude and another for transform and variability orders. 
 Before
 presenting our main results, we state the following general set-up.

\begin{setup}\label{setup4.1}
	Let $X_1,\ldots,X_{n_1}$ be a random sample of size $n_1$ from a
	continuous nonnegative random  variable $X^{(1)}\thicksim\mathcal{LS}(F,\sigma_1,\lambda_1)$, and $X_{n_1+1},\ldots,X_n$ be another independent random sample of size $n_2$ from a continuous nonnegative random variable $X^{(2)}\thicksim\mathcal{LS}(F,\sigma_2,\lambda_2)$ where $n_1+n_2=n$. Let $Y_1,\ldots,Y_{n_1^*}$ be a random sample of size $n_1^*$ from a continuous nonnegative random variable $Y^{(1)}\thicksim\mathcal{LS}(F,\mu_1,\theta_1)$, and $Y_{n_1^*+1},\ldots,Y_{n^*}$ be another independent random sample of size $n_2^*$ from a continuous nonnegative random variable $Y^{(2)}\thicksim\mathcal{LS}(F,\mu_2,\theta_2)$ where $n_1^*+n_2^*=n^*$. 
	Assume that the baseline cdf $F$ has unbounded support.
	Denote by $U$ the MRV representing the finite mixture of $X_1,\ldots,X_n$ with mixing proportions $r_i=r_1$ for $i=1,\ldots,n_1$ and $r_i=r_2$ for $i=n_1+1,\ldots,n$ such as $n_1r_1+n_2r_2=1$, and $V$ the MRV representing the finite mixture of $Y_1,\ldots,Y_{n^*}$ with mixing proportions $r_i=r_1$ for $i=1,\ldots,n_1^*$ and $r_i=r_2$ for $i=n_1^*+1,\ldots,n^*$
	such as $n_1^*r_1+n_2^*r_2=1$.   
\end{setup}

In what follows, we will use the notation $U_{\lambda}$ when we want to emphasize that the stochastic comparison depends on the lambda parameters, and we will use $U_{n}$ when we want to highlight that the stochastic comparison depends on the number of variables.

\subsection{Magnitude orders}

The focus here lies in understanding the relationships between two MRVs characterized by distinct scale or location parameters. 
This exploration centers on magnitude orders, encompassing criteria such as the usual, hazard rate, reversed hazard rate, and likelihood ratio orders.
In the first part of this subsection, we will investigate the case where the number of random variables composing both MRVs is the same, i.e., $n_i=n_i^*$ for $i=1,2$.
  
\begin{remark}  
Observe that,   under Set-up \ref{setup4.1},
when $n_i=n_i^*$ for $i=1,2$, it is easy to check that $\lambda_i\leq(\geq)~\theta_i$ and $\sigma_i\leq(\geq)~\mu_i$ for all $i=1,2$ imply $X^{(i)}\leq_{st}(\geq_{st})~Y^{(i)}$, for $i=1,2$, and therefore $U\leq_{st}(\geq_{st})~V$. For this reason, we study weaker conditions under which the two MRVs are ordered in some stochastic sense.    
\end{remark}

  In the following result, we assume distinct scale parameter vectors:
  \[\boldsymbol{\lambda}=(\underbrace{\lambda_1,\ldots,\lambda_1}_{n_1\rm\ times},\underbrace{\lambda_2,\ldots,\lambda_2}_{n_2\rm\ times})
  \quad\textup{and}\quad
  \boldsymbol{\theta}=(\underbrace{\theta_1,\ldots,\theta_1}_{n_1\rm\ times},\underbrace{\theta_2,\ldots,\theta_2}_{n_2\rm\ times}),
  \]
while the location parameters are common for both models. Under these conditions, we establish that the weak submajorization order between the vectors of scale parameters implies the usual stochastic ordering between two MRVs $U_{\boldsymbol{\lambda}}$ and $V_{\boldsymbol{\theta}}$.

\begin{theorem}\label{theorem3.1}
    Under Set-up \ref{setup4.1}, let $n_i=n_i^*$ and $\sigma_i=\mu_i$ for $i=1,2$. Suppose the baseline function $F$ is IPRFR
  and
    $(\lambda_1 -\lambda_2)(\sigma_1 - \sigma_2)\geq 0$. 
    Then, for $n_1 r_1 \leq (\geq)~n_2 r_2$,
 \begin{itemize}
 	\item[i)] 
 	$	(\underbrace{\lambda_1,\ldots,\lambda_1}_{n_1\rm\ times},\underbrace{\lambda_2,\ldots,\lambda_2}_{n_2\rm\ times})\preccurlyeq_{w}(\underbrace{\theta_1,\ldots,\theta_1}_{n_1\rm\ times},\underbrace{\theta_2,\ldots,\theta_2}_{n_2\rm\ times})$ implies
 	$U_{\boldsymbol{\lambda}}\geq_{st} V_{\boldsymbol{\theta}}$,    
	\item[ii)] 
	$	(\underbrace{\lambda_1,\ldots,\lambda_1}_{n_1\rm\ times},\underbrace{\lambda_2,\ldots,\lambda_2}_{n_2\rm\ times})\stackrel{w}{\preccurlyeq}(\underbrace{\theta_1,\ldots,\theta_1}_{n_1\rm\ times},\underbrace{\theta_2,\ldots,\theta_2}_{n_2\rm\ times})$ implies
	$U_{\boldsymbol{\lambda}}\geq_{st} V_{\boldsymbol{\theta}}$. 
 \end{itemize}  
\end{theorem}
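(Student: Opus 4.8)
The plan is to reduce both parts to a single sign analysis of the partial derivatives of the survival function of the MRV, and then invoke Lemma \ref{lemma2.1} with the appropriate monotonicity of these partials. First I would write $\bar F_U$ in the piecewise form \eqref{eq_sfU}, with $\sigma_1\geq\sigma_2$ (this is the content of $(\lambda_1-\lambda_2)(\sigma_1-\sigma_2)\geq 0$ together with the ordering convention of $\mathcal{D}_2^+$; note that under Set-up \ref{setup4.1} the scale vector lies in $\mathcal{D}_n^+$ precisely when $\lambda_1\geq\lambda_2$, which with $\sigma_1\geq\sigma_2$ is consistent). Since $\sigma_i=\mu_i$ and $n_i=n_i^*$, the two MRVs $U_{\boldsymbol\lambda}$ and $V_{\boldsymbol\theta}$ differ only through their scale vectors, so it suffices to regard $x\mapsto\bar F_{U}(x)$ as a function $\varphi(\boldsymbol\lambda)$ of the scale vector (for each fixed $x$) and show that $U_{\boldsymbol\lambda}\geq_{st}V_{\boldsymbol\theta}$ follows from the prescribed (super/sub)majorization of $\boldsymbol\lambda$ by $\boldsymbol\theta$. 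Because the scale vector has only two distinct values (repeated $n_1$ and $n_2$ times), $\varphi$ as a function on $\mathcal{D}_n^+$ (or $\mathcal{E}_n^+$) has partial derivative in the $k$th slot equal to $\frac{\partial}{\partial\lambda_1}\big(\text{summand}\big)$ or $\frac{\partial}{\partial\lambda_2}\big(\text{summand}\big)$ depending on whether $k\leq n_1$ or $k>n_1$.

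Next I would compute these partials explicitly. On the region $x>\sigma_1$ we have $\bar F_U(x)=n_1r_1\bar F\!\left(\frac{x-\sigma_1}{\lambda_1}\right)+n_2r_2\bar F\!\left(\frac{x-\sigma_2}{\lambda_2}\right)$, so for $k\leq n_1$,
\[
\varphi_{(k)}(\boldsymbol\lambda)=\frac{\partial}{\partial\lambda_1}\,\bar F\!\left(\frac{x-\sigma_1}{\lambda_1}\right)\cdot r_1
= r_1\,\frac{x-\sigma_1}{\lambda_1^{2}}\,f\!\left(\frac{x-\sigma_1}{\lambda_1}\right)\ \geq 0,
\]
and similarly for $k>n_1$ with $\sigma_1,\lambda_1,r_1$ replaced by $\sigma_2,\lambda_2,r_2$; on the middle region $\sigma_2<x\leq\sigma_1$ the first term of $\bar F_U$ is the constant $n_1r_1$ and only the $k>n_1$ partial survives. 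In all cases $\varphi_{(k)}\geq 0$. The crucial point is then the monotonicity of $\varphi_{(k)}$ in $k$, i.e. comparing the value $r_1\frac{x-\sigma_1}{\lambda_1^2}f\!\left(\frac{x-\sigma_1}{\lambda_1}\right)$ against $r_2\frac{x-\sigma_2}{\lambda_2^2}f\!\left(\frac{x-\sigma_2}{\lambda_2}\right)$. Writing $y_i=(x-\sigma_i)/\lambda_i$, each term is $r_i\cdot\frac{1}{\lambda_i}\cdot y_i f(y_i)$, and since $y f(y)=(1-F(y))\cdot\frac{yf(y)}{1-F(y)}=\bar F(y)\cdot y\tilde h_{\text{(reversed in the survival sense)}}$... more usefully, $yf(y)$ relates to the \emph{reversed failure rate} via $yf(y)=F(y)\cdot y\tilde h(y)$ where $\tilde h=f/F$. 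The IPRFR hypothesis says $y\tilde h(y)=yf(y)/F(y)$ is increasing. Combined with $F(y)$ increasing, this makes $yf(y)$ increasing in $y$; together with $\sigma_1\geq\sigma_2$, $\lambda_1\geq\lambda_2$ one controls whether $y_1\lessgtr y_2$ and hence the ordering of the two terms, while the factor $r_i/\lambda_i$ and the assumption $n_1r_1\leq(\geq)n_2r_2$ (equivalently $r_1\lessgtr r_2$ up to the $n_i$'s, but here $n_i=n_i^*$ so it is genuinely $n_1r_1$ vs $n_2r_2$) pin down the sign. This yields that $\varphi_{(k)}$ is nonnegative and increasing (resp. decreasing) in $k$ on $\mathcal{D}_n^+$ (resp. $\mathcal{E}_n^+$), which is exactly the hypothesis of Lemma \ref{lemma2.1}(i) for part (i).

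For part (ii) I would instead use that $V_{\boldsymbol\theta}\geq_{st}U_{\boldsymbol\lambda}$ is equivalent to comparing the survival functions in the other direction, or equivalently work with $-\bar F_U$ (equivalently $F_U$) so that the relevant partials become non-positive, and then apply Lemma \ref{lemma2.1}(ii): weak supermajorization $\boldsymbol\lambda\stackrel{w}{\preccurlyeq}\boldsymbol\theta$ together with $\psi_{(k)}$ non-positive and decreasing (resp. increasing) in $k$ gives $\psi_{(k)}(\boldsymbol\lambda)\leq\psi_{(k)}(\boldsymbol\theta)$ and hence the stated ordering. Concretely, taking $\psi=F_U$ (a function of $\boldsymbol\lambda$ for fixed $x$), one has $\psi_{(k)}=-\varphi_{(k)}\leq 0$ and the monotonicity in $k$ flips, so the same IPRFR plus sign computation applies verbatim with reversed inequalities. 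The two parts therefore share one lemma: $yf(y)$ is increasing under IPRFR, and the main obstacle is the careful bookkeeping of the three regions of $\bar F_U$ (in particular the middle region $\sigma_2<x\leq\sigma_1$, where one of the two partials vanishes identically) to confirm that the required monotonicity of $\varphi_{(k)}$ in $k$ holds \emph{uniformly in $x$}; once that is in hand, both conclusions follow by direct appeal to Lemma \ref{lemma2.1}.
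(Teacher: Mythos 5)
Your overall strategy coincides with the paper's proof of Theorem \ref{theorem3.1}: for each fixed $x$ you view $\bar F_U$ (resp.\ $F_U$) in its piecewise form \eqref{eq_sfU} (resp.\ \eqref{eq_cdfU}) as a function of the scale vector, verify the sign of the partial derivatives separately on $\sigma_2<x\leq\sigma_1$ (where one partial vanishes) and on $x>\sigma_1$, order the partials across the two blocks via the IPRFR decomposition $t f(t)=F(t)\,t\tilde{h}(t)$ together with $\lambda_1\geq\lambda_2$, $\sigma_1\geq\sigma_2$ (reversed on $\mathcal{E}$), and then invoke Lemma \ref{lemma2.1}(i) for part (i) and Lemma \ref{lemma2.1}(ii) applied to the cdf for part (ii). So the route is not different from the paper's.

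There is, however, one concrete gap in your key comparison. You take the $k$-th partial derivative coordinatewise in the $n$-vector, which carries the weight $r_1$ for $k\leq n_1$ and $r_2$ for $k>n_1$; with that normalization, monotonicity of $\varphi_{(k)}$ in $k$ amounts to comparing $\frac{r_1}{\lambda_1}\,t_1\tilde{h}(t_1)F(t_1)$ with $\frac{r_2}{\lambda_2}\,t_2\tilde{h}(t_2)F(t_2)$, and the stated hypothesis $n_1r_1\leq n_2r_2$ does not deliver this: it does not imply $r_1\leq r_2$ (take $n_1=1$, $n_2=5$, $r_1=0.5$, $r_2=0.1$, so that $n_1r_1=n_2r_2$ but $r_1>r_2$), so the required ordering of the two block values does not follow from the assumptions as you have set them up. The paper instead differentiates the full mixture survival function with respect to the two distinct values $\lambda_1,\lambda_2$ on $\mathcal{D}_2^+$ (resp.\ $\mathcal{E}_2^+$), so each partial carries the factor $n_ir_i$ (see \eqref{eq_20} and \eqref{eq_21}) and the hypothesis $n_1r_1\leq(\geq)\,n_2r_2$ enters verbatim. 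You should either adopt that formulation or replace the hypothesis by $r_1\leq r_2$ in your coordinatewise version; the parenthetical remark ``it is genuinely $n_1r_1$ vs $n_2r_2$'' does not resolve this mismatch. A minor further slip: in part (ii) you momentarily write the conclusion as $V_{\boldsymbol\theta}\geq_{st}U_{\boldsymbol\lambda}$, whereas the claimed (and intended) conclusion is $U_{\boldsymbol\lambda}\geq_{st}V_{\boldsymbol\theta}$, which is indeed what your argument with $F_U$ and Lemma \ref{lemma2.1}(ii) targets.
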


\begin{proof}
	  Note that $(\lambda_1 -\lambda_2)(\sigma_1 - \sigma_2)\geq 0$ means that
	either $(\lambda_1,\lambda_2)\in\mathcal{D}_2^+$ and $(\sigma_1,\sigma_2)\in\mathcal{D}_2^+$, or 
	$(\lambda_1,\lambda_2)\in\mathcal{E}_2^+$ and $(\sigma_1,\sigma_2)\in\mathcal{E}_2^+$.

  	{\bf (i)}  Under the current conditions, the sf of  $U_{\boldsymbol{\lambda}}$ 
  	is given in \eqref{eq_sfU}, and the sf of 
  	$V_{\boldsymbol{\theta}}$ can be obtained also from \eqref{eq_sfU} by substituting $\lambda_i$ for $\theta_i$ with $i=1,2$. Initially, we consider
  	the case when $(\lambda_1,\lambda_2)\in\mathcal{D}_2^+$ and $(\sigma_1,\sigma_2)\in\mathcal{D}_2^+$. Thus,
  	utilizing Lemma \ref{lemma2.1}(i), the proof of this part is completed by demonstrating that $\partial\bar{F}_{U_{\boldsymbol{\lambda}}}(x)/\partial\lambda_k$ is a non-negative increasing function for $k\in{1,2}$ for all $\boldsymbol{\lambda}$ within the interior of $\mathcal{D}_2^+$.
 For $x>\sigma_1$, 
 the partial derivative of $\bar{F}_{U_{\boldsymbol{\lambda}}}(x)$ with respect to $\lambda_i$ is obtained as
	\begin{eqnarray}\label{eq_20}
	\frac{\partial\bar{F}_{U_{\boldsymbol{\lambda}}}(x)}{\partial\lambda_i}=n_i r_i\frac{x-\sigma_i}{\lambda_i^2}f\left(\frac{x-\sigma_i}{\lambda_i}\right)\geq 0, 
	\end{eqnarray}
for $i=1,2$. 
On the other hand, 
under the assumptions made, we have $\lambda_1\geq\lambda_2$, $\sigma_1\geq\sigma_2$ and $n_1 r_1\leq n_2 r_2$, then clearly we obtain
	\begin{eqnarray}\label{eq_21}
\frac{n_1 r_1}{\lambda_1}
\left(\frac{x-\sigma_1}{\lambda_1}\right)
\tilde{h}\left(\frac{x-\sigma_1}{\lambda_1}\right)
F\left(\frac{x-\sigma_1}{\lambda_1}\right)
\leq
\frac{n_2 r_2}{\lambda_2}
\left(\frac{x-\sigma_2}{\lambda_2}\right)
\tilde{h}\left(\frac{x-\sigma_2}{\lambda_2}\right)
F\left(\frac{x-\sigma_2}{\lambda_2}\right),
	\end{eqnarray}
since $t\tilde{h}(t)$ is increasing in $t$. Therefore,
\[
\frac{\partial\bar{F}_{U_{\boldsymbol{\lambda}}}(x)}{\partial\lambda_1}\leq
\frac{\partial\bar{F}_{U_{\boldsymbol{\lambda}}}(x)}{\partial\lambda_2},
\]
with $x>\sigma_1$.
Now, for $\sigma_2<x\leq\sigma_1$, we have
\[\frac{\partial\bar{F}_{U_{\boldsymbol{\lambda}}}(x)}{\partial\lambda_1}=0\leq
n_2 r_2\frac{x-\sigma_2}{\lambda_2^2}f\left(\frac{x-\sigma_2}{\lambda_2}\right)=
\frac{\partial\bar{F}_{U_{\boldsymbol{\lambda}}}(x)}{\partial\lambda_2}.
\]

For the second case  when  $(\lambda_1,\lambda_2)\in\mathcal{E}_2^+$ and $(\sigma_1,\sigma_2)\in\mathcal{E}_2^+$, 
 we have
\[\frac{\partial\bar{F}_{U_{\boldsymbol{\lambda}}}(x)}{\partial\lambda_1}=
n_1 r_1\frac{x-\sigma_1}{\lambda_1^2}f\left(\frac{x-\sigma_1}{\lambda_1}\right)
\geq
0=
\frac{\partial\bar{F}_{U_{\boldsymbol{\lambda}}}(x)}{\partial\lambda_2},
\]
for $\sigma_1<x\leq\sigma_2$.
On the other hand, for $x>\sigma_2$,
the inequality in \eqref{eq_20}
also holds and we get
\[
\frac{n_1 r_1}{\lambda_1}
\left(\frac{x-\sigma_1}{\lambda_1}\right)
\tilde{r}\left(\frac{x-\sigma_1}{\lambda_1}\right)
F\left(\frac{x-\sigma_1}{\lambda_1}\right)
\geq
\frac{n_2 r_2}{\lambda_2}
\left(\frac{x-\sigma_2}{\lambda_2}\right)
\tilde{r}\left(\frac{x-\sigma_2}{\lambda_2}\right)
F\left(\frac{x-\sigma_2}{\lambda_2}\right),
\]
based on the assumptions made.
Thus, $\partial\bar{F}_{U_{\boldsymbol{\lambda}}}(x)/\partial\lambda_k$ is
a non-negative decreasing function in $k\in\{1,2\}$ for all $\boldsymbol{\lambda}$ in the interior of $\mathcal{E}_2^+$.
Therefore, again from Lemma \ref{lemma2.1}(i),
we obtain the required result.

{\bf (ii)} The distribution function of  $U_{\boldsymbol{\lambda}}$ 
is given in \eqref{eq_cdfU}, and the distribution function of 
$V_{\boldsymbol{\theta}}$ can be obtained also from \eqref{eq_cdfU} by changing $\lambda_i$ for $\theta_i$ with $i=1,2$. Firstly, we consider $(\lambda_1,\lambda_2)\in\mathcal{D}_2^+$ and $(\sigma_1,\sigma_2)\in\mathcal{D}_2^+$. Thus,
with the use of Lemma \ref{lemma2.1}(ii),
the proof of the result gets completed if we show that $\partial{F}_{U_{\boldsymbol{\lambda}}}(x)/\partial\lambda_k$ is
a non-positive decreasing function in $k\in\{1,2\}$ for all $\boldsymbol{\lambda}$ in the interior of $\mathcal{D}_2^+$.
For $x>\sigma_1$, 
the partial derivative of ${F}_{U_{\boldsymbol{\lambda}}}(x)$ with respect to $\lambda_i$ is obtained as
\begin{eqnarray*}
\frac{\partial{F}_{U_{\boldsymbol{\lambda}}}(x)}{\partial\lambda_i}=-n_i r_i\frac{x-\sigma_i}{\lambda_i^2}f\left(\frac{x-\sigma_i}{\lambda_i}\right)\leq 0, 
\end{eqnarray*}
for $i=1,2$. 
On the other hand, 
given the stated assumptions of $\lambda_1\geq\lambda_2$, $\sigma_1\geq\sigma_2$, and $n_1 r_1\leq n_2 r_2$, it is evident that the inequality in \eqref{eq_21} also holds, as $t\tilde{r}(t)$ is increasing in $t$. Therefore,
\[
\frac{\partial{F}_{U_{\boldsymbol{\lambda}}}(x)}{\partial\lambda_1}\geq
\frac{\partial{F}_{U_{\boldsymbol{\lambda}}}(x)}{\partial\lambda_2},
\]
with $x>\sigma_1$.
Now, for $\sigma_2<x\leq\sigma_1$, we have
\[\frac{\partial{F}_{U_{\boldsymbol{\lambda}}}(x)}{\partial\lambda_1}=0\geq
n_2 r_2\frac{x-\sigma_2}{\lambda_2^2}f\left(\frac{x-\sigma_2}{\lambda_2}\right)=
\frac{\partial{F}_{U_{\boldsymbol{\lambda}}}(x)}{\partial\lambda_2}.
\]

For the second case when $(\lambda_1,\lambda_2)\in\mathcal{E}_2^+$ and $(\sigma_1,\sigma_2)\in\mathcal{E}_2^+$, the proof is quite analogous, therefore it is omitted.
\end{proof}


Next, we explore the scenario in which the location parameter vectors
\[\boldsymbol{\sigma}=(\underbrace{\sigma_1,\ldots,\sigma_1}_{n_1\rm\ times},\underbrace{\sigma_2,\ldots,\sigma_2}_{n_2\rm\ times})
\quad\textup{and}\quad
\boldsymbol{\mu}=(\underbrace{\mu_1,\ldots,\mu_1}_{n_1\rm\ times},\underbrace{\mu_2,\ldots,\mu_2}_{n_2\rm\ times})
\]
are different but the scale parameter vector $\boldsymbol{\lambda}$ is common for both multiple-outlier $\mathcal{LS}$ mixture models.
The following two counterexamples illustrate that under conditions similar to those considered in Theorem 4.1, MRVs may not be ordered in the sense of the usual stochastic order when the location parameter vectors are different. Although not detailed in this article for brevity, the various cases we have studied lead us to the conclusion that
 the usual stochastic order among MRVs is not satisfied, in general, when the location parameter vectors are different, as the support of these random variables depends on these parameters.
The only option for the usual stochastic order to hold in this case is for $\sigma_i\leq(\geq)~\mu_i$ for $i=1,2$.
In the first counterexample, we consider 
$(\sigma_1,\sigma_2),(\mu_1,\mu_2)\in\mathcal{E}_2^+$ 
with $\mu_1\leq\sigma_1\leq\sigma_2\leq\mu_2$, while in the second counterexample, we assume $(\sigma_1,\sigma_2),(\mu_1,\mu_2)\in\mathcal{D}_2^+$  and 
$\mu_2\leq\sigma_2\leq\sigma_1\leq\mu_1$.

	\begin{counterexample}\label{counterexample4.2}
	Let us consider a Weibull distribution with cdf $F(t;\alpha)=1-e^{-t^{\alpha}}$ for $t\geq 0$ and $\alpha>0$ as the baseline distribution for the mixture models of multiple-outlier location-scale family of distributions. Assume that $\alpha=2$. Then, it is easy to verify that $F$ is DPRHR 
 Further, let $n_1=n_1^*=3$, $n_2=n_2^*=2$, $n_1r_1=0.51$, $n_2r_2=0.49$, $\lambda_1=\theta_1=2$, $\lambda_2=\theta_2=4$, $\sigma_1=6$, $\sigma_2=8$, $\mu_1=4$, and $\mu_2=12$. 
Therefore, the location parameter vectors verify
\[	\boldsymbol{\sigma}=(\underbrace{6,6,6}_{3\rm\ times},\underbrace{8,8}_{2\rm\ times})\preccurlyeq_{w}(\underbrace{4,4,4}_{3\rm\ times},\underbrace{12,12}_{2\rm\ times})=\boldsymbol{\mu}.\] 
Now, from \eqref{eq_cdfU}, we derive the cdfs of the mixed random variables $U_{\boldsymbol{\sigma}}$ and $V_{\boldsymbol{\mu}}$.		
\begin{equation*}
F_{U_{\boldsymbol{\sigma}}}(t) =
\begin{cases}
0 & \text{if } t \leq 6, \\
0.51\left[1 - e^{-\left(\frac{t-6}{2}\right)^2}\right] & \text{if } 6 < t \leq 8, \\
0.51\left[1 - e^{-\left(\frac{t-6}{2}\right)^2}\right] + 0.49\left[1 - e^{-\left(\frac{t-8}{4}\right)^2}\right] & \text{if } t > 8,
\end{cases}
\end{equation*}
and
\begin{equation*}
F_{V_{\boldsymbol{\mu}}}(t) =
\begin{cases}
0 & \text{if } t \leq 4, \\
0.51\left[1 - e^{-\left(\frac{t-4}{2}\right)^2}\right] & \text{if } 4 < t \leq 12, \\
0.51\left[1 - e^{-\left(\frac{t-4}{2}\right)^2}\right] + 0.49\left[1 - e^{-\left(\frac{t-12}{4}\right)^2}\right] & \text{if } t > 12.
\end{cases}
\end{equation*}
The cdfs of $U_{\boldsymbol{\sigma}}$ and $V_{\boldsymbol{\mu}}$ are depicted in Figure \ref{c1}. Upon inspection of that figure, it is evident that the cdfs intersect. Consequently, we observe that $U_{\boldsymbol{\sigma}}\ngeq_{st}(\nleq_{st})~V_{\boldsymbol{\mu}}$. In other words, the usual stochastic order between $U_{\boldsymbol{\sigma}}$ and $V_{\boldsymbol{\mu}}$ does not hold.    
\end{counterexample}

\begin{counterexample}\label{counterexample4.3}
	Let us consider a Fr\'echet distribution as the baseline distribution for the mixture of multiple-outlier location-scale family distributed components with cdf $F(t)=e^{-t^{-\alpha}}$ for $t>0$ and $\alpha>0$. Assume that $\alpha=3$. It can be easily seen that $F$ is DPRFR.	
 Moreover, let $n_1=n_1^*=4$, $n_2=n_2^*=3$, $r_1=0.1$, $r_2=0.2$, $\lambda_1=\theta_1=6$, $\lambda_2=\theta_2=5$, $\sigma_1=9$, $\sigma_2=6$, $\mu_1=15$, and $\mu_2=2$. 
 Again, the location parameter vectors verify
 \[	\boldsymbol{\sigma}=
 (\underbrace{9,9,9,9}_{4\rm\ times},\underbrace{6,6,6}_{3\rm\ times})\preccurlyeq_{w}(\underbrace{15,15,15,15}_{4\rm\ times},\underbrace{2,2,2}_{3\rm\ times})=\boldsymbol{\mu}.
 \]
In this case, from \eqref{eq_cdfU}, we obtain the 
 cdfs of the MRVs $U_{\boldsymbol{\sigma}}$ and $V_{\boldsymbol{\mu}}$  as follows 
\begin{equation*}
	F_{U_{\boldsymbol{\sigma}}}(t) =
	\begin{cases}
		0 &  \text{if } t \leq 6, \\
		0.6\,e^{-(\frac{t-6}{5})^{-3}} &  \text{if } 6 < t \leq 9, \\
		0.4\,e^{-(\frac{t-9}{6})^{-3}} + 0.6\,e^{-(\frac{t-6}{5})^{-3}} &  \text{if } t > 9,
	\end{cases}
\end{equation*}
and
\begin{equation*}
	F_{V_{\boldsymbol{\mu}}}(t) =
	\begin{cases}
		0 &  \text{if } t \leq 2, \\
		0.6\,e^{-(\frac{t-2}{5})^{-3}} &  \text{if } 2 < t \leq 15, \\
		0.4\,e^{-(\frac{t-15}{6})^{-3}} + 0.6\,e^{-(\frac{t-2}{5})^{-3}} &  \text{if } t > 15.
	\end{cases}
\end{equation*}
The cdfs for $U_{\boldsymbol{\sigma}}$ and $V_{\boldsymbol{\mu}}$ are shown in Figure \ref{c2}. From that figure, it is evident that $U_{\boldsymbol{\sigma}}\ngeq_{st}(\nleq_{st})~V_{\boldsymbol{\mu}}$. In other words, the usual stochastic order between $U_{\boldsymbol{\sigma}}$ and $V_{\boldsymbol{\mu}}$ is not satisfied.
\end{counterexample}

\begin{figure}
	\begin{center}
		\subfigure[]{\label{c1}\includegraphics[width=3.2in]{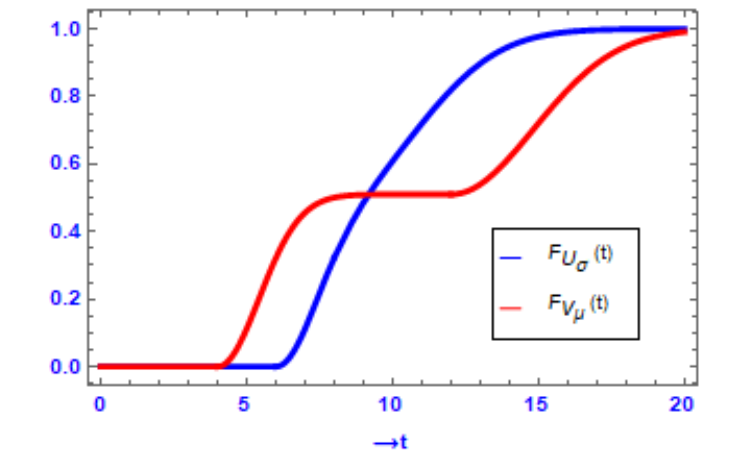}}
		\subfigure[]{\label{c2}\includegraphics[width=3.2in]{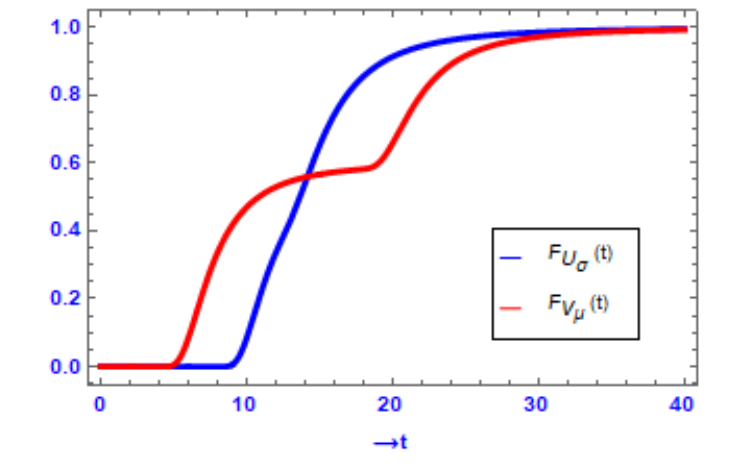}}
	\end{center}
	\caption{(a) Plots of the cdfs of $U_{\boldsymbol{\sigma}}$ and  $V_{\boldsymbol{\mu}}$ in Counterexample \ref{counterexample4.2}. (b) Plots of the cdfs of $U_{\boldsymbol{\sigma}}$ and $V_{\boldsymbol{\mu}}$ in Counterexample \ref{counterexample4.3}.}
\end{figure}

Below, we delve into another interesting problem previously considered in the literature, specifically regarding extreme order statistics. The problem at hand involves studying sufficient conditions for magnitude orders to hold when the number of random variables forming the multiple-outlier models is different, that is, $n_i \neq n_{i}^{*}$ for $i=1,2$. In particular, the study of extreme order statistics from independent variables was explored by \cite{balakrishnan2016comparisons}, whereas \cite{navarro2018comparisons} focused on the case of largest order statistics from multiple-outlier models with dependence. In this work, we investigate the scenario of MRVs arising from multiple-outlier models within the location-scale family.
In the subsequent four outcomes, we investigate the four common orders of magnitude. The subsequent result outlines the sufficient conditions for the MRVs to exhibit the usual stochastic order.

\begin{theorem}\label{theorem3.4}
	Under Set-up \ref{setup4.1}, let $X^{(i)}\stackrel{st}{=}Y^{(i)}$ for $i=1,2$.  
	Assume that 
	$\boldsymbol{r}, \boldsymbol{\lambda}, \boldsymbol{\sigma}\in\mathcal{D}_2^+$
	(or $\boldsymbol{r}, \boldsymbol{\lambda}, \boldsymbol{\sigma}\in\mathcal{E}_2^+$).	
Then, for $n\leq (\geq)~n^{*}$,  we have
$U_{n}\leq_{st}(\geq_{st})~U_{n^{*}}$.
\end{theorem}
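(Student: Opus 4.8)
## Proof Proposal for Theorem 4.3

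The plan is to compare the survival functions of $U_n$ and $U_{n^*}$ directly, using the piecewise expression \eqref{eq_sfU}. Since $X^{(i)}\stackrel{st}{=}Y^{(i)}$, the two MRVs differ only through the counts $n_i$ versus $n_i^*$, while the baseline $F$, the scales $\lambda_i$, and the locations $\sigma_i$ are shared. The key observation is that the mixing weights are \emph{not} held fixed in absolute terms: both $n_1 r_1 + n_2 r_2 = 1$ and $n_1^* r_1 + n_2^* r_2 = 1$ must hold, so increasing the total count $n^* \geq n$ forces a redistribution of mass between the two subpopulations. I would first treat the case $\boldsymbol{r},\boldsymbol{\lambda},\boldsymbol{\sigma}\in\mathcal{D}_2^+$, so that $\sigma_1\geq\sigma_2$, and handle the mirror case $\mathcal{E}_2^+$ by the symmetric argument (which I would state is analogous and omit).

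First I would fix $x$ and split into the three regimes of \eqref{eq_sfU}: $x\leq\sigma_2$, where both survival functions equal $1$; $\sigma_2 < x \leq \sigma_1$, where $\bar F_{U_n}(x) = n_1 r_1 + n_2 r_2 \bar F((x-\sigma_2)/\lambda_2)$; and $x>\sigma_1$, where $\bar F_{U_n}(x) = n_1 r_1 \bar F((x-\sigma_1)/\lambda_1) + n_2 r_2 \bar F((x-\sigma_2)/\lambda_2)$. In the middle regime, writing $\bar F_{U_n}(x) = 1 - n_2 r_2 F((x-\sigma_2)/\lambda_2)$ and similarly for $U_{n^*}$, the sign of $\bar F_{U_n}(x) - \bar F_{U_{n^*}}(x)$ is the sign of $(n_2^* - n_2) r_2 F((x-\sigma_2)/\lambda_2)$, and since $n^* \geq n$ with $n_1, n_1^*$ possibly differing, I would use the normalization constraints to pin down the sign of $n_2^* - n_2$. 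In the top regime, the difference becomes $(n_1 - n_1^*) r_1 \bar F((x-\sigma_1)/\lambda_1) + (n_2 - n_2^*) r_2 \bar F((x-\sigma_2)/\lambda_2)$, and I would factor out using the identity $(n_1 - n_1^*) r_1 = -(n_2 - n_2^*) r_2$ (from subtracting the two normalization equations) to reduce this to a single term proportional to $\bar F((x-\sigma_2)/\lambda_2) - \bar F((x-\sigma_1)/\lambda_1)$, whose sign is controlled by the ordering $\sigma_1\geq\sigma_2$ together with $\lambda_1 \geq \lambda_2$ and the monotonicity of $\bar F$.

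The main obstacle I anticipate is that the constraints $n_1 r_1 + n_2 r_2 = 1 = n_1^* r_1 + n_2^* r_2$ with $n \leq n^*$ do not by themselves fix the signs of $n_1 - n_1^*$ and $n_2 - n_2^*$ individually: one could increase $n_1$ and decrease $n_2$, or vice versa, as long as the weighted total is preserved and the unweighted total grows. I expect the resolution to hinge on the $\mathcal{D}_2^+$ hypothesis on $\boldsymbol{r}$, namely $r_1 \geq r_2 > 0$: combined with $n^* \geq n$ this should force the redistribution to go in a definite direction (more mass onto the subpopulation with the larger location parameter $\sigma_1$, whose support starts later, hence making $U_{n^*}$ stochastically larger). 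I would make this precise by deriving $n_2 r_2 \geq n_2^* r_2$ (equivalently $n_1 r_1 \leq n_1^* r_1$) from $r_1 \geq r_2$ and $n_1 + n_2 \leq n_1^* + n_2^*$, then checking that this single sign conclusion makes all three regime-comparisons point the same way, yielding $\bar F_{U_n}(x) \leq \bar F_{U_{n^*}}(x)$ for every $x$, i.e. $U_n \leq_{st} U_{n^*}$. The reversed inequalities in the $\mathcal{E}_2^+$ case follow by interchanging the roles of the two subpopulations.
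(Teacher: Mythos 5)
Your overall skeleton (comparing the survival functions from \eqref{eq_sfU} regime by regime and exploiting the identity obtained by subtracting the two normalizations, $(n_1^*-n_1)r_1=-(n_2^*-n_2)r_2$) is exactly the route the paper takes, but the step you yourself single out as the crux is resolved with the wrong sign, and this breaks the argument. You claim that $r_1\ge r_2$ together with $n_1+n_2\le n_1^*+n_2^*$ yields $n_2r_2\ge n_2^*r_2$ (equivalently $n_1r_1\le n_1^*r_1$). The constraints force the opposite: writing $d_i=n_i^*-n_i$, the normalizations give $d_1r_1=-d_2r_2$, hence $n^*-n=d_1+d_2=d_2\left(1-\frac{r_2}{r_1}\right)$, and since $r_1\ge r_2$ the requirement $n^*-n\ge0$ forces $d_2\ge0$, i.e.\ $n_2^*\ge n_2$ and $n_1^*\le n_1$. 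A concrete check: $r_1=0.4$, $r_2=0.1$, $(n_1,n_2)=(2,2)$, $(n_1^*,n_2^*)=(1,6)$ satisfy both normalizations with $r_1\ge r_2$ and $n=4<7=n^*$, yet $n_2<n_2^*$. Intuitively, enlarging the total count at fixed per-unit weights must push aggregate mass onto the subpopulation with the smaller weight $r_2$, which in the $\mathcal{D}_2^+$ case is the stochastically smaller subpopulation, not the one with the larger location $\sigma_1$ as your heuristic suggests.

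With the correct sign your own regime computations reverse direction: for $\sigma_2<x\le\sigma_1$ the difference is $\bar F_{U_n}(x)-\bar F_{U_{n^*}}(x)=(n_2^*-n_2)r_2F\left(\frac{x-\sigma_2}{\lambda_2}\right)\ge0$, and for $x>\sigma_1$ it equals $(n_2^*-n_2)r_2\left[\bar F\left(\frac{x-\sigma_1}{\lambda_1}\right)-\bar F\left(\frac{x-\sigma_2}{\lambda_2}\right)\right]\ge0$ because $\frac{x-\sigma_1}{\lambda_1}\le\frac{x-\sigma_2}{\lambda_2}$ when $\lambda_1\ge\lambda_2$ and $\sigma_1\ge\sigma_2$. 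So the method proves $U_n\ge_{st}U_{n^*}$ for $n\le n^*$, not $U_n\le_{st}U_{n^*}$. This is not purely a defect of your write-up: the paper's proof dismisses the key inequality \eqref{eq3.16} with a one-line appeal to the monotonicity of $\bar F$, glossing over exactly this sign issue, and the reversed conclusion $U_n\ge_{st}U_{n^*}$ is the one consistent with Theorem \ref{theorem3.6}, where $n\le n^*$ entails $n_1n_2^*\ge n_1^*n_2$ and gives $U_n\ge_{hr}U_{n^*}$. The genuine gap in your proposal is therefore the false sign lemma; once it is corrected, your three-regime comparison is sound but establishes the stochastic ordering in the direction opposite to the one stated, which should be flagged rather than forced to match the statement.
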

\begin{proof}
	 Once again, the sf of $U_{n}$ is provided in \eqref{eq_sfU}, and the sf of $U_{n^{*}}$ can be derived similarly from \eqref{eq_sfU} by substituting $n_i$ with $n_i^{*}$ for $i=1,2$. We only present the proof for the scenario where $\boldsymbol{r}, \boldsymbol{\lambda}, \boldsymbol{\sigma}\in\mathcal{D}_2^+$, as the proof for the alternative case is analogous.
	For $x>\sigma_1$, to prove the required result, we have to show that,
		\begin{equation}\label{eq3.16}
	(n_2 - n_2^{*})r_2 \bar F\left(\frac{x-\sigma_2}{\lambda_2}\right)\leq
	(n_1^{*}-n_1)r_1 \bar F\left(\frac{x-\sigma_1}{\lambda_1}\right).
	\end{equation}
	Because $\bar F(x)$ is a decreasing function
	with respect to $x$ we readily observe that  \eqref{eq3.16} holds under the assumptions.
	For $\sigma_2<x\leq\sigma_1$, it is easy to check that the inequality
	\[(n_2 - n_2^{*})r_2 \bar F\left(\frac{x-\sigma_2}{\lambda_2}\right)\leq
	(n_1^{*}-n_1)r_1
	\]
	also holds under the assumptions. Therefore, 
	the desired result is proven.
\end{proof}

In the next two results, we explore the hazard rate and reversed hazard rate orderings between the two mixed random variables, $U_{n}$ and $U_{n^{*}}$, in the case where the number of random variables in the multiple-outlier models differs, i.e., $n_i \neq n_i^*$ for $i=1,2$.


\begin{theorem}\label{theorem3.6}
Under Set-up \ref{setup4.1}, let $X^{(i)}\stackrel{st}{=}Y^{(i)}$ for $i=1,2$. Suppose $F$ is IFR and
$\boldsymbol{\lambda}, \boldsymbol{\sigma}\in\mathcal{D}_2^+$
or $F$ is DPFR and $ \boldsymbol{\lambda}, \boldsymbol{\sigma}\in\mathcal{E}_2^+$.
Then, 	$U_n\geq_{hr}U_{n^*}$ when
$n_1n_2^*\geq n_1^*n_2$. 
\end{theorem}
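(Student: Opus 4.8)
The plan is to establish the hazard rate order through its pointwise form, $h_{U_n}(x)\le h_{U_{n^*}}(x)$ for every $x$, after recognizing that both mixed random variables are, in fact, genuine two-component mixtures. Writing $\bar F_{X^{(i)}}$, $f_{X^{(i)}}$, $h_{X^{(i)}}$ for the survival function, density, and hazard rate of $X^{(i)}$ (with the convention $\bar F_{X^{(i)}}(x)=1$ for $x\le\sigma_i$), the survival function in \eqref{eq_sfU} — and its obvious analogue when $\sigma_1\le\sigma_2$ — says exactly that $\bar F_{U_n}=n_1r_1\,\bar F_{X^{(1)}}+n_2r_2\,\bar F_{X^{(2)}}$ with $n_1r_1+n_2r_2=1$; so $U_n$ is the mixture of $X^{(1)}$ and $X^{(2)}$ with weight $p:=n_1r_1$ on the first component, and likewise $U_{n^*}$ with weight $p^*:=n_1^*r_1$. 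If $r_1=0$ or $r_2=0$ the two models coincide and the claim is trivial, so assume $r_1,r_2>0$; then the two constraints $n_1r_1+n_2r_2=1=n_1^*r_1+n_2^*r_2$ give $(n_1-n_1^*)r_1=(n_2^*-n_2)r_2$, from which $n_1n_2^*\ge n_1^*n_2$ is readily seen to be equivalent to $n_1\ge n_1^*$, i.e.\ to $p\ge p^*$.

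Next I would record an elementary monotonicity fact about two-component mixtures. For $t\in[0,1]$ let $M_t$ denote the mixture with weight $t$ on $X^{(1)}$, so $\bar F_{M_t}=t\bar F_{X^{(1)}}+(1-t)\bar F_{X^{(2)}}$ and $f_{M_t}=tf_{X^{(1)}}+(1-t)f_{X^{(2)}}$. Differentiating its hazard rate and using $f_{X^{(i)}}=\bar F_{X^{(i)}}h_{X^{(i)}}$ gives
\[
\frac{\partial}{\partial t}\,h_{M_t}(x)
=\frac{\bar F_{X^{(1)}}(x)\,\bar F_{X^{(2)}}(x)\,\bigl(h_{X^{(1)}}(x)-h_{X^{(2)}}(x)\bigr)}
{\bigl(t\,\bar F_{X^{(1)}}(x)+(1-t)\,\bar F_{X^{(2)}}(x)\bigr)^{2}}
\ \stackrel{sign}{=}\ h_{X^{(1)}}(x)-h_{X^{(2)}}(x),
\]
an identity that remains valid on the ranges where one component has not yet started. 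Hence, if $X^{(1)}\geq_{hr}X^{(2)}$ (equivalently $h_{X^{(1)}}\le h_{X^{(2)}}$ pointwise), then $t\mapsto h_{M_t}(x)$ is nonincreasing for each $x$, so $p\ge p^*$ yields $h_{U_n}(x)=h_{M_p}(x)\le h_{M_{p^*}}(x)=h_{U_{n^*}}(x)$ for all $x$; since $\bar F_{U_n}$ and $\bar F_{U_{n^*}}$ are continuous, this is precisely the statement $U_n\geq_{hr}U_{n^*}$.

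It remains to prove the componentwise order $X^{(1)}\geq_{hr}X^{(2)}$. Consider first $F$ IFR with $\boldsymbol{\lambda},\boldsymbol{\sigma}\in\mathcal{D}_2^+$, so $\lambda_1\ge\lambda_2$ and $\sigma_1\ge\sigma_2$. For $\sigma_2<x\le\sigma_1$ one has $h_{X^{(1)}}(x)=0\le h_{X^{(2)}}(x)$, while for $x>\sigma_1$,
\[
h_{X^{(i)}}(x)=\frac{1}{\lambda_i}\,h_F\!\left(\frac{x-\sigma_i}{\lambda_i}\right),\qquad i=1,2,
\]
with $h_F$ the baseline hazard rate; since $1/\lambda_1\le1/\lambda_2$ and $(x-\sigma_1)/\lambda_1\le(x-\sigma_2)/\lambda_2$, and $h_F$ is increasing, both factors are no larger at $i=1$, whence $h_{X^{(1)}}(x)\le h_{X^{(2)}}(x)$. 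For $F$ DPFR with $\boldsymbol{\lambda},\boldsymbol{\sigma}\in\mathcal{E}_2^+$ the argument is symmetric: on the overlap of the two supports one uses that $th_F(t)$ is decreasing together with $(x-\sigma_1)/\lambda_1\ge(x-\sigma_2)/\lambda_2$ and $x-\sigma_1\ge x-\sigma_2$ to deduce $\lambda_1^{-1}h_F((x-\sigma_1)/\lambda_1)\le\lambda_2^{-1}h_F((x-\sigma_2)/\lambda_2)$, the remaining range being trivial as before.

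The crux of the argument — and the only place the structural hypotheses are consumed — is this last componentwise comparison on the set where $x$ exceeds both location parameters: it is there that the ageing property (IFR, resp.\ DPFR) must be combined with the fact that $\boldsymbol{\lambda}$ and $\boldsymbol{\sigma}$ lie in the same cone. A minor technical point is the junction $x=\max\{\sigma_1,\sigma_2\}$, where $f_{U_n}$ jumps but $\bar F_{U_n}$ stays continuous, so that the pointwise hazard-rate inequality still translates into the monotonicity of the ratio $\bar F_{U_n}/\bar F_{U_{n^*}}$ defining $\geq_{hr}$.
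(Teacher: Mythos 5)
Your route is genuinely different from the paper's. The paper writes down the piecewise hazard rate of $U_n$ from \eqref{eq_pdf} and \eqref{eq_sfU} and verifies $h_{U_n}(x)\le h_{U_{n^*}}(x)$ interval by interval by cross-multiplying, reducing everything to $\lambda_1^{-1}h(t_1)\le\lambda_2^{-1}h(t_2)$. You instead exploit the fact that $U_n$ and $U_{n^*}$ are mixtures of the \emph{same} two components with weights $p=n_1r_1$ and $p^*=n_1^*r_1$, show that $n_1n_2^*\ge n_1^*n_2$ is equivalent to $p\ge p^*$ under the common constraints (this is correct, and you rightly dispose of the degenerate cases $r_1=0$ or $r_2=0$), and then invoke the monotonicity of a two-component mixture's hazard in the mixing weight when the components are hr-ordered; your derivative identity for $h_{M_t}$ is exact. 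In the case $F$ IFR with $\boldsymbol{\lambda},\boldsymbol{\sigma}\in\mathcal{D}_2^+$ this gives a complete and arguably cleaner proof: the componentwise order $X^{(1)}\geq_{hr}X^{(2)}$ does hold there, since $h_{X^{(1)}}=0$ on $(\sigma_2,\sigma_1]$ and, for $x>\sigma_1$, IFR together with $\lambda_1\ge\lambda_2$, $\sigma_1\ge\sigma_2$ yields $\lambda_1^{-1}h\bigl((x-\sigma_1)/\lambda_1\bigr)\le\lambda_2^{-1}h\bigl((x-\sigma_2)/\lambda_2\bigr)$, which is precisely the paper's key inequality.

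The gap is in the DPFR/$\mathcal{E}_2^+$ case. There $\sigma_1\le\sigma_2$, and on $(\sigma_1,\sigma_2]$ the roles reverse: $h_{X^{(2)}}(x)=0$ while $h_{X^{(1)}}(x)=\lambda_1^{-1}h\bigl((x-\sigma_1)/\lambda_1\bigr)$ is in general strictly positive, so the componentwise order $X^{(1)}\geq_{hr}X^{(2)}$ that your mixture lemma requires is false; your remark that ``the remaining range is trivial as before'' is exactly where the argument breaks, because the trivial direction from the $\mathcal{D}_2^+$ case flips. Moreover the defect cannot be repaired by a different decomposition: on $(\sigma_1,\sigma_2]$ one has $\bar F_{U_n}(x)/\bar F_{U_{n^*}}(x)=\bigl(n_1r_1\bar F(t_1)+n_2r_2\bigr)/\bigl(n_1^*r_1\bar F(t_1)+n_2^*r_2\bigr)$, whose derivative with respect to $\bar F(t_1)$ has the sign of $r_1r_2(n_1n_2^*-n_1^*n_2)\ge0$; since $\bar F(t_1)$ decreases in $x$, this ratio is \emph{decreasing} in $x$ on that interval whenever $n_1n_2^*>n_1^*n_2$, $r_1r_2>0$ and $\sigma_1<\sigma_2$, so the asserted conclusion $U_n\geq_{hr}U_{n^*}$ itself fails there except in degenerate situations ($\sigma_1=\sigma_2$, or $n_1n_2^*=n_1^*n_2$, which under the weight constraints forces $n_i=n_i^*$). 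Be aware that the paper's own proof does not confront this interval either: in the $\mathcal{E}_2^+$ case it only asserts that the argument ``follows a similar structure'' and verifies \eqref{eq3.14} for $x$ exceeding both location parameters. So your D-case argument stands as a nice alternative, but your E-case step is a genuine gap, and it points at a range of $x$ that the paper's proof also leaves unexamined.
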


\begin{proof}
Firstly, we present the proof for the case 
$\boldsymbol{\lambda}, \boldsymbol{\sigma}\in\mathcal{D}_2^+$.
From (\ref{eq_pdf}) and (\ref{eq_sfU}), we obtain the following 
	hazard rate function of the MRV $U_{n}$ for the case $(\sigma_1,\sigma_2)\in\mathcal{D}_2^+$:
	\begin{equation}\label{eqn_hr}
	 h_{U_n}(x)=\left\{ 
	\begin{array}{ll}
		0& \textup{if } x\leq\sigma_2,\\
		& \\
	\dfrac{\frac{n_2 r_2}{\lambda_2} f(t_2)}{n_1 r_1 +n_2 r_2 \bar F(t_2)}
	&\textup{if } \sigma_2<x\leq\sigma_1,\\
	& \\
	\dfrac{\frac{n_1 r_1}{\lambda_1} f(t_1)+\frac{n_2 r_2}{\lambda_2} f(t_2)}{n_1 r_1\bar F(t_1) +n_2 r_2 \bar F(t_2)}
	&\textup{if } x>\sigma_1,\\
	\end{array}
	\right. 
	\end{equation}
where $t_i=(x-\sigma_i)/\lambda_i$ for $i=1,2$.	
The hazard rate function of 
$U_{n^{*}}$ can be obtained also from (\ref{eqn_hr}) by changing $n_i$ for $n_i^{*}$ with $i=1,2$.   
To prove the desired result, we have to show that 
\begin{equation*}
h_{U_n}(x)\leq h_{U_{n^*}}(x),
\end{equation*}
for all $x\geq 0$. 
Firstly,
for $\sigma_2<x\leq\sigma_1$, 
we need to prove that
\begin{equation*}
\dfrac{n_2}{n_1 r_1 +n_2 r_2 \bar F(t_2)}\leq
\dfrac{n_{2}^{*}}{n_1^{*} r_1 +n_2^{*} r_2 \bar F(t_2)},
\end{equation*}
which is equivalent to
$n_1^{*}n_2\leq n_1n_2^{*}$.
Secondly, we consider $x>\sigma_1$. In this subinterval,
we have to show that 
\begin{equation}\label{eq3.13}
\frac{\frac{n_1 r_1}{\lambda_1} f(t_1)+\frac{n_2 r_2}{\lambda_{2}}f(t_2)}{n_1 r_1 \bar{F}(t_1)+n_2 r_2\bar{F}(t_2)}\leq
\frac{\frac{n_1^* r_1}{\lambda_1}f(t_1)+\frac{n_2^* r_2}{\lambda_2}f(t_2)}{n_1^* r_1 \bar{F}(t_1)+n_2^* r_2\bar{F}(t_2)},
\end{equation}
which is equivalent to
\begin{equation*}
\left[\frac{n_1 r_1}{\lambda_1} f(t_1)+\frac{n_2 r_2}{\lambda_2}f(t_2)\right]\left[{n_1^* r_1 \bar{F}(t_1)+n_2^* r_2\bar{F}(t_2)}\right]
\leq\left[{\frac{n_1^* r_1}{\lambda_1} f(t_1)+\frac{n_2^* r_2}{\lambda_2}f(t_2)}\right]\left[{n_1 r_1 \bar{F}(t_1)+n_2 r_2\bar{F}(t_2)}\right].\nonumber
\end{equation*}
Simplifying the above inequality, we obtain
\begin{equation*}
\frac{r_1 r_2}{\lambda_1}(n_1 n_2^*-n_1^* n_2)f(t_1)\bar{F}(t_2)\leq\frac{r_1 r_2}{\lambda_2}(n_1 n_2^*-n_1^* n_2)f(t_2)\bar{F}(t_1),
\end{equation*}
which is equivalent to
\begin{equation}\label{eq3.14}
\frac{1}{\lambda_1}h(t_1)\leq\frac{1}{\lambda_2}h(t_2),
\end{equation}
since $(n_1n_2^*-n_1^*n_2)\geq 0$ by the assumptions.
Now, since $\lambda_1\geq~\lambda_2$, $\sigma_1\geq~\sigma_2$ and  $h(x)$ is increasing in $x>0$, we have that the inequality \eqref{eq3.14} holds.
Therefore, we get the inequality given in (\ref{eq3.13}) for the case $(\sigma_1,\sigma_2)\in\mathcal{D}_2^+$.
Secondly, in the case where $ \boldsymbol{\lambda}, \boldsymbol{\sigma}\in\mathcal{E}_2^+$, it is worth noting that the proof follows a similar structure up to \eqref{eq3.14}. Subsequently, we reformulate the function in that equation as follows:
\begin{equation*}
\frac{1}{\lambda_i}h(t_i)=\frac{1}{x-\sigma_i}
\left(\frac{x-\sigma_i}{\lambda_i}\right)h\left(\frac{x-\sigma_i}{\lambda_i}\right),
\end{equation*}
for $i=1,2$. Considering that $\lambda_1\leq~\lambda_2$, $\sigma_1\leq~\sigma_2$, and $xh(x)$ is decreasing for $x>0$, it follows that the inequality \eqref{eq3.14} is satisfied.
Consequently, the inequality stated in (\ref{eq3.13}) is obtained when $ \boldsymbol{\lambda}, \boldsymbol{\sigma}\in\mathcal{E}_2^+$.
Thus, the theorem is proven.
\end{proof}

\begin{remark}
	It is worth mentioning that, by carrying out a similar proof to the one in the previous result, it can be concluded that for
	$F$ IFR and
	$\boldsymbol{\lambda}, \boldsymbol{\sigma}\in\mathcal{D}_2^+$
	or $F$ DPFR and $ \boldsymbol{\lambda}, \boldsymbol{\sigma}\in\mathcal{E}_2^+$, 
 	$U_n\leq_{hr}U_{n^*}$ when
	$n_1n_2^*\leq n_1^*n_2$. 
\end{remark}


\begin{theorem}\label{theorem3.7}
	Under Set-up \ref{setup4.1}, let $X^{(i)}\stackrel{st}{=}Y^{(i)}$ for $i=1,2$.
	Suppose $F$ is DPRFR and
	$\boldsymbol{\lambda}, \boldsymbol{\sigma}\in\mathcal{D}_2^+$
	or $F$ is IRFR and $ \boldsymbol{\lambda}, \boldsymbol{\sigma}\in\mathcal{E}_2^+$.
	Then, 	$U_n\geq_{rh}U_{n^*}$ when
	$n_1n_2^*\geq n_1^*n_2$. 	
\end{theorem}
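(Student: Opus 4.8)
The plan is to argue exactly as in the proof of Theorem~\ref{theorem3.6}, but working with the reversed hazard rate $\tilde h_{U_n}(x)=f_{U_n}(x)/F_{U_n}(x)$ read off from \eqref{eq_pdf} and \eqref{eq_cdfU}. I would first treat the case $\boldsymbol{\lambda},\boldsymbol{\sigma}\in\mathcal{D}_2^+$ (so $\sigma_1\ge\sigma_2$ and $\lambda_1\ge\lambda_2$) and split $\mathbb{R}^+$ into the three pieces dictated by \eqref{eq_cdfU}. On $\{x\le\sigma_2\}$ we have $F_{U_n}(x)=F_{U_{n^*}}(x)=0$, so there is nothing to check. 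On $\sigma_2<x\le\sigma_1$ only the second component contributes to both the numerator and the denominator, hence $\tilde h_{U_n}(x)=\frac{1}{\lambda_2}\tilde h(t_2)$ with $t_2=(x-\sigma_2)/\lambda_2$; since this carries no dependence on the multiplicities, $\tilde h_{U_n}(x)=\tilde h_{U_{n^*}}(x)$ there and the inequality $\tilde h_{U_n}\ge\tilde h_{U_{n^*}}$ holds with equality. (Here the reversed-hazard argument is slightly cleaner than the hazard-rate one, since the ``$n_1 r_1$'' term appearing in the survival function \eqref{eq_sfU} has no counterpart in the cdf \eqref{eq_cdfU}.)

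The substantive step is the region $x>\sigma_1$, where, writing $t_i=(x-\sigma_i)/\lambda_i$,
\[
\tilde h_{U_n}(x)=\frac{\frac{n_1r_1}{\lambda_1}f(t_1)+\frac{n_2r_2}{\lambda_2}f(t_2)}{n_1r_1F(t_1)+n_2r_2F(t_2)} .
\]
Clearing the (positive) denominators in $\tilde h_{U_n}(x)\ge\tilde h_{U_{n^*}}(x)$, expanding, and cancelling the symmetric $f(t_1)F(t_1)$ and $f(t_2)F(t_2)$ terms as in the proof of Theorem~\ref{theorem3.6}, the inequality collapses to
\[
(n_1n_2^*-n_1^*n_2)\,r_1r_2\left[\frac{f(t_1)F(t_2)}{\lambda_1}-\frac{f(t_2)F(t_1)}{\lambda_2}\right]\ge 0 .
\]
Because $n_1n_2^*\ge n_1^*n_2$ by hypothesis, and $F(t_1),F(t_2)>0$, it remains only to establish $\frac{1}{\lambda_1}\tilde h(t_1)\ge\frac{1}{\lambda_2}\tilde h(t_2)$.

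To finish, I would rewrite $\frac{1}{\lambda_i}\tilde h(t_i)=\frac{t_i\tilde h(t_i)}{x-\sigma_i}$. When $\boldsymbol{\lambda},\boldsymbol{\sigma}\in\mathcal{D}_2^+$ one has $t_1\le t_2$ (because $\sigma_1\ge\sigma_2$ and $\lambda_1\ge\lambda_2$) and $x-\sigma_1\le x-\sigma_2$; since $F$ is DPRFR, $t\mapsto t\tilde h(t)$ is decreasing, so $t_1\tilde h(t_1)\ge t_2\tilde h(t_2)$, and multiplying by $\frac{1}{x-\sigma_1}\ge\frac{1}{x-\sigma_2}$ yields the desired inequality; combined with the previous two regions this gives $U_n\ge_{rh}U_{n^*}$. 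For $\boldsymbol{\lambda},\boldsymbol{\sigma}\in\mathcal{E}_2^+$ the reduction down to $\frac{1}{\lambda_1}\tilde h(t_1)\ge\frac{1}{\lambda_2}\tilde h(t_2)$ is identical (it never used the ordering of the $\sigma_i$, only positivity of the factors), and now $t_1\ge t_2$ and $\frac{1}{\lambda_1}\ge\frac{1}{\lambda_2}$, so the IRFR hypothesis ($\tilde h$ increasing) gives $\frac{1}{\lambda_1}\tilde h(t_1)\ge\frac{1}{\lambda_1}\tilde h(t_2)\ge\frac{1}{\lambda_2}\tilde h(t_2)$. The only point requiring care --- rather than any real difficulty --- is the piecewise structure of $F_{U_n}$ forced by the distinct location parameters: one must check that the middle interval $\sigma_2<x\le\sigma_1$ (resp.\ $\sigma_1<x\le\sigma_2$) presents no obstruction and keep the sign bookkeeping straight in the $x>\sigma_1$ region; everything else is a routine transcription of the proof of Theorem~\ref{theorem3.6}.
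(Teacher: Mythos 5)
Your proposal is correct and follows essentially the same route as the paper: the same piecewise treatment dictated by \eqref{eq_cdfU} (equality of reversed hazards on the middle interval), the same cross-multiplication reducing the case $x>\max\{\sigma_1,\sigma_2\}$ to $\frac{1}{\lambda_1}\tilde h(t_1)\geq\frac{1}{\lambda_2}\tilde h(t_2)$ via $n_1n_2^*\geq n_1^*n_2$, and the same rewriting $\frac{1}{\lambda_i}\tilde h(t_i)=\frac{t_i\tilde h(t_i)}{x-\sigma_i}$ to exploit the DPRFR (resp.\ IRFR) hypothesis in the $\mathcal{D}_2^+$ (resp.\ $\mathcal{E}_2^+$) case. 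No gaps.
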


\begin{proof}
	  As before, 
		we firstly present the proof for the case $\boldsymbol{\lambda}, \boldsymbol{\sigma}\in\mathcal{D}_2^+$.
		From \ref{eq_pdf} and \ref{eq_cdfU}, we obtain the following reversed
	hazard rate function of the MRV $U_{n}$ for the case $(\sigma_1,\sigma_2)\in\mathcal{D}_2^+$:
	\begin{equation}\label{eq_hr}
	\tilde{h}_{U_n}(x)=\left\{ 
	\begin{array}{ll}
	0& \textup{if } x\leq\sigma_2,\\
	& \\
	\dfrac{f(t_2)}{\lambda_{2} F(t_2)}
	&\textup{if } \sigma_2<x\leq\sigma_1,\\
	& \\
	\dfrac{\frac{n_1 r_1}{\lambda_1} f(t_1)+\frac{n_2 r_2}{\lambda_{2}}f(t_2)}{n_1 r_1 F(t_1)+n_2 r_2F(t_2)}
	&\textup{if } x>\sigma_1,\\
	\end{array}
	\right. 
	\end{equation}
	where $t_i=(x-\sigma_i)/\lambda_i$ for $i=1,2$.	
	The reversed hazard rate function of 
	$U_{n^{*}}$ can be obtained also from \eqref{eq_hr} by changing $n_i$ for $n_i^{*}$ with $i=1,2$. 
		To prove the required result, we have to show that 
		\begin{equation*}
		\tilde{h}_{U_n}(x)\geq\tilde{h}_{U_{n^*}}(x),
	\end{equation*}
	for all $x\geq 0$. 
	Firstly,
	for $\sigma_2<x\leq\sigma_1$,   it is evident that 
	$\tilde{h}_{U_n}(x)=\tilde{h}_{U_{n^*}}(x)$.
	Secondly, we consider $x>\sigma_1$. In this subinterval,
 we have to show that 
	\begin{equation}\label{eq3.32}
	\frac{\frac{n_1 r_1}{\lambda_1} f(t_1)+\frac{n_2 r_2}{\lambda_{2}}f(t_2)}{n_1 r_1 F(t_1)+n_2 r_2F(t_2)}\geq
	\frac{\frac{n_1^* r_1}{\lambda_1}f(t_1)+\frac{n_2^* r_2}{\lambda_2}f(t_2)}{n_1^* r_1 F(t_1)+n_2^* r_2F(t_2)},
\end{equation}
which is equivalent to
\begin{equation*}
	\left[\frac{n_1 r_1}{\lambda_1} f(t_1)+\frac{n_2 r_2}{\lambda_2}f(t_2)\right]\left[{n_1^* r_1 F(t_1)+n_2^* r_2F(t_2)}\right]
	\geq\left[{\frac{n_1^* r_1}{\lambda_1} f(t_1)+\frac{n_2^* r_2}{\lambda_2}f(t_2)}\right]\left[{n_1 r_1 F(t_1)+n_2 r_2F(t_2)}\right].\nonumber
\end{equation*}
Simplifying the above inequality, we obtain
\begin{equation*}
\frac{r_1 r_2}{\lambda_1}(n_1 n_2^*-n_1^* n_2)f(t_1)F(t_2)\geq\frac{r_1 r_2}{\lambda_2}(n_1 n_2^*-n_1^* n_2)f(t_2)F(t_1),
\end{equation*}
which is equivalent to
\begin{equation}\label{eq3.33}
\frac{1}{\lambda_1}\tilde{h}(t_1)\geq\frac{1}{\lambda_2}\tilde{h}(t_2),
\end{equation}
since $(n_1n_2^*-n_1^*n_2)\geq0$ by the assumptions.
On the other hand, observe that
\begin{equation*}
\frac{1}{\lambda_i}\tilde{h}(t_i)=\frac{1}{x-\sigma_i}
\left(\frac{x-\sigma_i}{\lambda_i}\right)\tilde{h}\left(\frac{x-\sigma_i}{\lambda_i}\right),
\end{equation*}
for $i=1,2$. Now, since $\lambda_1\geq~\lambda_2$,  $\sigma_1\geq~\sigma_2$  and  $x\tilde{h}(x)$ is decreasing in $x>0$, we have that the inequality      \eqref{eq3.33} holds.
Therefore, we get the inequality given in (\ref{eq3.32}) $\boldsymbol{\lambda}, \boldsymbol{\sigma}\in\mathcal{D}_2^+$.
In the second scenario, where $ \boldsymbol{\lambda}, \boldsymbol{\sigma}\in\mathcal{E}_2^+$, the proof closely parallels the argumentation until reaching equation \eqref{eq3.33}. It is important to note that, given $\lambda_1\leq~\lambda_2$ and $\sigma_1\leq~\sigma_2$, coupled with the fact that $\tilde{h}(x)$ is an increasing function for $x>0$, the inequality \eqref{eq3.33} remains valid in this particular case.
Consequently, we establish the inequality as presented in (\ref{eq3.32}), thereby concluding the proof of the theorem.
\end{proof}

\begin{remark}
	As before, it is worth mentioning that, by carrying out a similar proof to the one in the previous result, it can be concluded that for
	$F$ DPRFR and
	$\boldsymbol{\lambda}, \boldsymbol{\sigma}\in\mathcal{D}_2^+$
	or $F$ IRFR and $ \boldsymbol{\lambda}, \boldsymbol{\sigma}\in\mathcal{E}_2^+$, 
	$U_n\leq_{rh}U_{n^*}$ when
	$n_1n_2^*\leq n_1^*n_2$. 
\end{remark}


We are now exploring the possibility of enhancing the conclusions drawn in Theorems \ref{theorem3.6} and \ref{theorem3.7} from hazard rate and reverse hazard rate orderings to likelihood ratio orderings.

\begin{theorem}\label{theorem3.8}	
		Under Set-up \ref{setup4.1}, let $X^{(i)}\stackrel{st}{=}Y^{(i)}$ for $i=1,2$.
		Suppose $F$ is IPLR and
		$\boldsymbol{\lambda}, \boldsymbol{\sigma}\in\mathcal{D}_2^+$
		or $F$ is DLR and $ \boldsymbol{\lambda}, \boldsymbol{\sigma}\in\mathcal{E}_2^+$.
		Then, 	$U_n\geq_{lr}U_{n^*}$ when
		$n_1n_2^*\geq n_1^*n_2$. 	
\end{theorem}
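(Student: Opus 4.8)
The plan is to mirror the structure of the proofs of Theorems \ref{theorem3.6} and \ref{theorem3.7}, now working with the density ratio $f_{U_{n^*}}(x)/f_{U_n}(x)$ and showing it is increasing in $x$ on $[0,\infty)$. First I would write down the pdf of $U_n$ from \eqref{eq_pdf} in its three pieces (zero on $x\le\sigma_2$, the single-term $\frac{n_2 r_2}{\lambda_2}f(t_2)$ on $\sigma_2<x\le\sigma_1$, and the two-term expression on $x>\sigma_1$, with $t_i=(x-\sigma_i)/\lambda_i$), and likewise for $U_{n^*}$ with $n_i$ replaced by $n_i^*$. The ratio is then piecewise: on $(\sigma_2,\sigma_1]$ it equals the constant $n_2^*/n_2$, which (together with $n_1 n_2^*\ge n_1^* n_2$, i.e. $n_2^*/n_2\ge n_1^*/n_1$) already handles that interval and the jump at $\sigma_1$; the real work is on $(\sigma_1,\infty)$.

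On $(\sigma_1,\infty)$ I would differentiate the ratio $\varphi(x)=\big(\tfrac{n_1^* r_1}{\lambda_1}f(t_1)+\tfrac{n_2^* r_2}{\lambda_2}f(t_2)\big)\big/\big(\tfrac{n_1 r_1}{\lambda_1}f(t_1)+\tfrac{n_2 r_2}{\lambda_2}f(t_2)\big)$ and check the sign of the numerator of $\varphi'$. Using $\tfrac{d}{dx}f(t_i)=\tfrac{1}{\lambda_i}f'(t_i)$, the numerator of $\varphi'(x)$ is $\stackrel{sign}{=}$ a bilinear form in the pairs $\big(\tfrac{f(t_1)}{\lambda_1},\tfrac{f(t_2)}{\lambda_2}\big)$ and $\big(\tfrac{f'(t_1)}{\lambda_1^2},\tfrac{f'(t_2)}{\lambda_2^2}\big)$, and after cancellation the cross terms collapse to something proportional to $(n_1 n_2^*-n_1^* n_2)\,\tfrac{r_1 r_2}{\lambda_1\lambda_2}\,f(t_1)f(t_2)\big(\tfrac{f'(t_2)}{\lambda_2 f(t_2)}-\tfrac{f'(t_1)}{\lambda_1 f(t_1)}\big)$ — the same mechanism that produced \eqref{eq3.14} and \eqref{eq3.33}. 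Since $n_1 n_2^*-n_1^* n_2\ge 0$, it suffices to show $\tfrac{f'(t_1)}{\lambda_1 f(t_1)}\ge \tfrac{f'(t_2)}{\lambda_2 f(t_2)}$, equivalently $-\tfrac{1}{\lambda_1}\tfrac{f'(t_1)}{f(t_1)}\le -\tfrac{1}{\lambda_2}\tfrac{f'(t_2)}{f(t_2)}$. Writing $-\tfrac{1}{\lambda_i}\tfrac{f'(t_i)}{f(t_i)}=\tfrac{1}{x-\sigma_i}\big(-t_i\tfrac{f'(t_i)}{f(t_i)}\big)$ and invoking that $F$ is IPLR (so $-t f'(t)/f(t)$ is increasing) together with $\lambda_1\ge\lambda_2$, $\sigma_1\ge\sigma_2$ forces $t_1\le t_2$ and $x-\sigma_1\le x-\sigma_2$, giving the desired inequality on $\mathcal{D}_2^+$; on $\mathcal{E}_2^+$ the roles reverse and one uses $F$ DLR (so $f'(t)/f(t)$ is decreasing, equivalently $-f'(t)/f(t)$ increasing) with $\lambda_1\le\lambda_2$, $\sigma_1\le\sigma_2$.

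I would close by assembling the pieces: $\varphi$ is increasing on $(\sigma_1,\infty)$, is the constant $n_2^*/n_2$ on $(\sigma_2,\sigma_1]$, and the value at $x=\sigma_1^+$ of the two-term expression is $\ge n_2^*/n_2$ precisely because $n_1 n_2^*\ge n_1^* n_2$ — so no downward jump occurs at $\sigma_1$ — while below $\sigma_2$ both densities vanish, which is compatible with $\le_{lr}$ in the usual convention. Hence $f_{U_{n^*}}/f_{U_n}$ is increasing on the whole support and $U_n\ge_{lr}U_{n^*}$. The main obstacle I anticipate is bookkeeping rather than conceptual: carefully verifying that the derivative's numerator really does reduce to a single signed cross term (the diagonal terms must cancel), and handling the behaviour across the breakpoint $\sigma_1$ and the left endpoint $\sigma_2$ so that monotonicity of the ratio is genuinely global and not merely piecewise. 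I should also double-check that IPLR is indeed the right hypothesis on $\mathcal{D}_2^+$ (as opposed to DPLR), since the $1/(x-\sigma_i)$ factor combined with $t_1\le t_2$ can be delicate; comparing with the IPRFR/IFR pattern in Theorems \ref{theorem3.1}, \ref{theorem3.6} and \ref{theorem3.7} suggests IPLR is correct, and that the implications IPLR $\Rightarrow$ IFR and IPLR $\Rightarrow$ IRFR (from \cite{oliveira2015proportional}) make this result a genuine strengthening.
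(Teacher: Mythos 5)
Your mechanism on the overlap region is essentially the paper's own: there the ratio $f_{U_n}(x)/f_{U_{n^*}}(x)$ is rewritten (following \cite{torrado2017stochastic}) as $\frac{1}{n_2^*}\bigl(n_2+\frac{n_1n_2^*-n_2n_1^*}{n_1^*+n_2^*\phi(x)}\bigr)$ with $\phi(x)=\frac{(r_2/\lambda_2)f(t_2)}{(r_1/\lambda_1)f(t_1)}$ and $t_i=(x-\sigma_i)/\lambda_i$, so that everything reduces to $\phi'(x)\le 0$, i.e.\ to $\frac{1}{\lambda_2}\frac{f'(t_2)}{f(t_2)}\le\frac{1}{\lambda_1}\frac{f'(t_1)}{f(t_1)}$, verified by the same rewriting $\frac{1}{\lambda_i}\frac{f'(t_i)}{f(t_i)}=\frac{1}{x-\sigma_i}\,t_i\frac{f'(t_i)}{f(t_i)}$ and the IPLR (resp.\ DLR) hypothesis that you invoke; your direct differentiation of the two-term ratio is just an algebraically equivalent route to the same reduced inequality, and your explicit treatment of the piece $(\sigma_2,\sigma_1]$ and of the breakpoint is if anything more careful than the paper's.

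There is, however, a consistent orientation error in your write-up. With the paper's definition ($X\le_{lr}Y$ iff $f_Y/f_X$ is increasing), the claim $U_n\ge_{lr}U_{n^*}$ requires $f_{U_n}/f_{U_{n^*}}$ to be increasing, i.e.\ your $\varphi=f_{U_{n^*}}/f_{U_n}$ to be \emph{decreasing}. The inequality you actually derive, $\frac{f'(t_1)}{\lambda_1 f(t_1)}\ge\frac{f'(t_2)}{\lambda_2 f(t_2)}$, gives $\varphi'\le 0$, which is the correct fact; but your announced goal and your closing sentence assert that $\varphi$ is increasing and deduce $U_n\ge_{lr}U_{n^*}$ from that, which contradicts the definition. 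The same inversion affects the breakpoint: since the numerator comparison at $x=\sigma_1^+$ hinges on $n_2 n_1^*$ versus $n_1 n_2^*$, the hypothesis $n_1n_2^*\ge n_1^*n_2$ yields $\varphi(\sigma_1^+)\le n_2^*/n_2$, not $\ge$, and that is exactly the direction compatible with $f_{U_n}/f_{U_{n^*}}$ increasing across $\sigma_1$; so all your monotonicity labels must be flipped consistently, after which the argument matches the paper. A second, smaller slip: in the $\mathcal{E}_2^+$ case you gloss DLR as ``$f'/f$ decreasing''; the paper uses, and the reduced inequality needs, $f'/f$ increasing (log-convexity). Indeed, with $f'/f$ decreasing, e.g.\ $f(t)\propto e^{-t^2/2}$ so that $\frac{1}{\lambda_i}\frac{f'(t_i)}{f(t_i)}=-(x-\sigma_i)/\lambda_i^2$, the required inequality reverses when $\lambda_1\le\lambda_2$ and $\sigma_1\le\sigma_2$. (Your concluding aside that \cite{oliveira2015proportional} gives IPLR $\Rightarrow$ IFR/IRFR is also inaccurate --- their implications concern the decreasing versions --- but that remark is peripheral to the proof.)
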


\begin{proof}
	The pdf of the MRVs $U_{n}$ is given in  \ref{eq_pdf}. 
		The pdf of 
	$U_{n^{*}}$ can be obtained also from \eqref{eq_pdf} by changing $n_i$ for $n_i^{*}$ with $i=1,2$. 
	We begin by presenting the proof for the case where $\boldsymbol{\lambda}, \boldsymbol{\sigma}\in\mathcal{D}_2^+$.	
	In the first place, for $\sigma_2<x\leq\sigma_1$, it is easy to check that 
	\[\frac{f_{U_{n}}(x)}{f_{U_{n^*}}(x)}=\frac{n_2}{n_{2}^*},
	\]
	which is both increasing and decreasing in $x$.
	Secondly, we consider $x>\sigma_1$.	
	To establish the desired result, we need to show that 
	\begin{equation*}
	\frac{f_{U_{n}}(x)}{f_{U_{n^*}}(x)}=\frac{\frac{n_1 r_1}{\lambda_1}f\left(\frac{x-\sigma_1}{\lambda_1}\right)+\frac{n_2 r_2}{\lambda_2}f\left(\frac{x-\sigma_2}{\lambda_2}\right) 
	}{\frac{n_1^* r_1}{\lambda_1} f\left(\frac{x-\sigma_1}{\lambda_1}\right)+\frac{n_2^* r_2}{\lambda_2}f\left(\frac{x-\sigma_2}{\lambda_2}\right)}
	\end{equation*} is increasing in $x$. Now, 
	following the idea of the proof of Theorem 3.7 in \cite{torrado2017stochastic},
	the above function can be rewritten as
	\begin{equation}\label{eq3.38}
	\frac{f_{U_{n}}(x)}{f_{U_{n^*}}(x)}=\frac{1}{n_{2}^{*}}\left(n_2 +\frac{n_1 n_{2}^{*}-n_2 n_{1}^*}{n_{1}^{*}+n_{2}^{*}\phi(x)}\right),
\end{equation}
	where
	\[\phi(x)=\frac{\frac{r_2}{\lambda_2}f\left(\frac{x-\sigma_2}{\lambda_2}\right)}{\frac{r_1}{\lambda_1}f\left(\frac{x-\sigma_1}{\lambda_1}\right)}.
	\]
	Differentiating \eqref{eq3.38} with respect to $x$, then we have
\begin{equation}
	\left[\frac{f_{U_{n}}(x)}{f_{U_{n^*}}(x)}\right]^{\prime}=-
	\frac{(n_1 n_{2}^{*}-n_2 n_{1}^*)\phi'(x)}{(n_{1}^*+n_{2}^*\phi(x))^{2}}.
\end{equation}	
Therefore, 	\eqref{eq3.38} is increasing in $x$, if $\phi(x)$ is a decreasing function with respect to $x$, since $n_1 n_{2}^{*}-n_2 n_{1}^*\geq 0$ by assumption.
Now, 		differentiating $\phi(x)$ with respect to $x$, we obtain
\begin{eqnarray}\label{phi}\nonumber
\phi'(x)&\overset{sign}{=}&\frac{r_1r_2}{\lambda_{1}\lambda_{2}^{2}}f'(t_2)f(t_1)-
\frac{r_1r_2}{\lambda_{1}^{2}\lambda_{2}}f'(t_1)f(t_2)\\
&\overset{sign}{=}&
\frac{1}{\lambda_2}\frac{f'(t_2)}{f(t_2)}-
\frac{1}{\lambda_1}\frac{f'(t_1)}{f(t_1)},
\end{eqnarray}
where $t_i=(x-\sigma_i)/\lambda_i$ for $i=1,2$.	
On the other hand, observe that
\begin{equation*}
\frac{1}{\lambda_i}\frac{f'(t_i)}{f(t_i)}
=\frac{1}{x-\sigma_i}
\left(\frac{x-\sigma_i}{\lambda_i}\right)
\frac{f'\left(\frac{x-\sigma_i}{\lambda_i}\right)}{f\left(\frac{x-\sigma_i}{\lambda_i}\right)},
\end{equation*}
for $i=1,2$. Now, since $\lambda_1\geq~\lambda_2$,  $\sigma_1\geq~\sigma_2$  and  $xf'(x)/f(x)$ is decreasing in $x>0$, we have that $\phi'(x)\leq 0$.
For the second case, where $ \boldsymbol{\lambda}, \boldsymbol{\sigma}\in\mathcal{E}_2^+$, the proof follows the same steps up to equation \eqref{phi}. Additionally, considering that $\lambda_1\leq~\lambda_2$, $\sigma_1\leq~\sigma_2$, and $f'(x)/f(x)$ is increasing for $x>0$, we can once again establish that $\phi'(x)\leq 0$.
This concludes the proof of the theorem.	
  \end{proof}

\begin{remark}
	As before, it is worth mentioning that, by carrying out a similar proof to the one in the previous result, it can be concluded that for
	$F$ IPLR and
	$\boldsymbol{\lambda}, \boldsymbol{\sigma}\in\mathcal{D}_2^+$
	or $F$ DLR and $ \boldsymbol{\lambda}, \boldsymbol{\sigma}\in\mathcal{E}_2^+$, 
	$U_n\leq_{lr}U_{n^*}$ when
	$n_1n_2^*\leq n_1^*n_2$. 
\end{remark}

\subsection{Transform and variability orders} 

In this subsection, our attention is directed towards a detailed examination of stochastic orders, namely the star order, the dispersive order, and the right spread order. These orders play a pivotal role in our exploration, focusing specifically on their application in comparing MRVs. From now on, we will use the following notation:
$\lambda_{1:2}=\min\{\lambda_1,\lambda_2\}$, $\lambda_{2:2}=\max\{\lambda_1,\lambda_2\}$, $\theta_{1:2}=\min\{\theta_1,\theta_2\}$, and $\theta_{2:2}=\max\{\theta_1,\theta_2\}$.

In the subsequent result, we focus on a scenario where the number of random variables in the multiple-outlier models is identical, denoted as $n_i=n_i^*$ for $i=1,2$. Additionally, we assume equal location parameters for both mixture observations, set at a constant scalar value $\sigma$.

	\begin{theorem}\label{theorem4.6}
		Under Set-up \ref{setup4.1}, let $n_i=n_i^*$ and $\sigma_1=\sigma_2=\mu_1=\mu_2=\sigma$. Assume that $n_1r_1\geq n_2r_2$. Then, for $\boldsymbol{\lambda},\boldsymbol{\theta}\in\mathcal{D}_2^+$ and fixed $\sigma>0$, we have 
		\begin{eqnarray*}
			\frac{\lambda_{1:2}}{\lambda_{2:2}}\leq\frac{\theta_{1:2}}{\theta_{2:2}}\Rightarrow U_{\boldsymbol{\lambda}}\geq_{*}V_{\boldsymbol{\theta}},
		\end{eqnarray*}
	provided that $F$ is concave and IPLR.             	
	\end{theorem}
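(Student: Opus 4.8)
The natural engine for a proof of this kind is the Saunders--Moran criterion recalled in Lemma \ref{lemma3.1}(i), which characterises the star order inside a one--parameter family of distributions; so the plan is to realise $U_{\boldsymbol{\lambda}}$ and $V_{\boldsymbol{\theta}}$ as two members of such a family and then check the monotonicity condition of that lemma. First, since $\sigma_1=\sigma_2=\mu_1=\mu_2=\sigma$ the middle branch of \eqref{eq_cdfU} is empty, so for $x>\sigma$
\[
F_{U_{\boldsymbol{\lambda}}}(x)=n_1r_1\,F\!\left(\frac{x-\sigma}{\lambda_1}\right)+n_2r_2\,F\!\left(\frac{x-\sigma}{\lambda_2}\right),
\]
and $F_{V_{\boldsymbol{\theta}}}$ is obtained by replacing $\lambda_i$ with $\theta_i$. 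Because $\boldsymbol{\lambda},\boldsymbol{\theta}\in\mathcal{D}_2^+$ we have $\lambda_1=\lambda_{2:2}\ge\lambda_2=\lambda_{1:2}$ and $\theta_1=\theta_{2:2}\ge\theta_2=\theta_{1:2}$; using that $\le_{*}$ is invariant under independent positive rescalings and that the location is common, it is enough to treat the case $\lambda_{2:2}=\theta_{2:2}=:a$, in which both MRVs belong to the family
\[
F_{\gamma}(x)=n_1r_1\,F\!\left(\frac{x-\sigma}{a}\right)+n_2r_2\,F\!\left(\frac{x-\sigma}{\gamma}\right),\qquad x>\sigma,\quad 0<\gamma\le a ,
\]
with $U_{\boldsymbol{\lambda}}\leftrightarrow\gamma=\lambda_{1:2}$ and $V_{\boldsymbol{\theta}}\leftrightarrow\gamma=\theta_{1:2}$; the hypothesis $\lambda_{1:2}/\lambda_{2:2}\le\theta_{1:2}/\theta_{2:2}$ now reads $\lambda_{1:2}\le\theta_{1:2}$, so it suffices to prove that the family $\{F_{\gamma}\}$ is $\le_{*}$--decreasing in $\gamma$.

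By Lemma \ref{lemma3.1}(i), applied after the obvious reversal $\gamma\mapsto-\gamma$ of the parameter, this is equivalent to the statement that
\[
\Psi_{\gamma}(x):=\frac{\partial F_{\gamma}(x)/\partial\gamma}{x\,f_{\gamma}(x)}
=\frac{-\,n_2r_2\,\dfrac{x-\sigma}{\gamma^{2}}\,f\!\left(\dfrac{x-\sigma}{\gamma}\right)}
{x\left[\dfrac{n_1r_1}{a}\,f\!\left(\dfrac{x-\sigma}{a}\right)+\dfrac{n_2r_2}{\gamma}\,f\!\left(\dfrac{x-\sigma}{\gamma}\right)\right]}
\]
is increasing in $x$ on $(\sigma,\infty)$. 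Writing $t=x-\sigma>0$ one factors $\Psi_{\gamma}(x)=-\,\dfrac{t}{t+\sigma}\,\Phi(t)$, where
\[
\Phi(t)=\frac{n_2r_2/\gamma^{2}}{\dfrac{n_1r_1}{a}\,\psi(t)+\dfrac{n_2r_2}{\gamma}},
\qquad \psi(t)=\frac{f(t/a)}{f(t/\gamma)} .
\]
Since $\Psi_{\gamma}$ is negative, the claim reduces to showing that $\bigl(\tfrac{t}{t+\sigma}\bigr)\Phi(t)$ is decreasing in $t$.

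This is the point at which the three hypotheses are meant to intervene. Concavity of $F$ forces $f$ to be decreasing, hence (as $a\ge\gamma$) $\psi(t)\ge 1$; moreover $\psi$ is increasing, since $t(\log\psi)'(t)=g(t/a)-g(t/\gamma)$ with $g(s)=sf'(s)/f(s)$, and $g$ is decreasing exactly because $F$ is IPLR (i.e.\ $-sf'(s)/f(s)$ is increasing) while $t/a\le t/\gamma$. Therefore the denominator of $\Phi$ is increasing and $\Phi$ is decreasing, and the condition $n_1r_1\ge n_2r_2$ is used to make the first, ``slowly varying'' term $\tfrac{n_1r_1}{a}\psi(t)$ dominate the denominator so that $\Phi$ decays fast enough. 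The remaining factor $t/(t+\sigma)$ is increasing and ranges over $(0,1)$, and I expect this to be the main obstacle: when $\sigma=0$ it disappears and IPLR alone closes the argument, whereas for $\sigma>0$ one must show that its growth cannot destroy the monotonicity of the product, which is where concavity of $F$ has to be exploited quantitatively rather than merely to get $f$ decreasing. Concretely, I would pass to logarithmic derivatives,
\[
\Bigl(\log\bigl[(\tfrac{t}{t+\sigma})\Phi(t)\bigr]\Bigr)' =\frac{\sigma}{t(t+\sigma)}+\frac{\Phi'(t)}{\Phi(t)} ,
\]
and bound $-\Phi'(t)/\Phi(t)$ from below by the concavity--driven lower estimate on $\psi'(t)/\psi(t)$, so as to force the right--hand side to be nonpositive for all $t>0$; pushing this estimate through is the delicate computational heart of the proof.
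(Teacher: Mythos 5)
Your overall strategy—reduce to a one-parameter family and invoke the Saunders criterion of Lemma \ref{lemma3.1}(i)—is the same engine the paper uses, but your choice of family creates a gap that you yourself flag and never close, and which in fact cannot be closed. Fixing the larger scale $a=\lambda_{2:2}=\theta_{2:2}$ and varying only the smaller one makes $\partial F_\gamma/\partial\gamma$ a single negative term, so $\Psi_\gamma(x)=-\tfrac{t}{t+\sigma}\Phi(t)$ with $\Phi>0$, and you need the product of a \emph{positive increasing} factor and a \emph{positive decreasing} factor to be decreasing; this is exactly the step you defer as ``the delicate computational heart.'' It is not a presentational gap: the logarithmic derivative is $\tfrac{\sigma}{t(t+\sigma)}+\Phi'(t)/\Phi(t)$, and $-\Phi'(t)/\Phi(t)\le\psi'(t)/\psi(t)=\tfrac1t\bigl[g(t/a)-g(t/\gamma)\bigr]$ with $g(s)=sf'(s)/f(s)$; for the exponential baseline (which is concave and IPLR) this bound equals the constant $1/\gamma-1/a$, while $\tfrac{\sigma}{t(t+\sigma)}\sim1/t$ as $t\to0^+$, so $\tfrac{t}{t+\sigma}\Phi(t)$ is \emph{increasing} near the left endpoint and the Saunders condition fails for your family even under all the stated hypotheses. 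Hence Lemma \ref{lemma3.1} cannot deliver the conclusion along your parameterization, and the proposal does not constitute a proof. (Your preliminary reduction to $\lambda_{2:2}=\theta_{2:2}$ by scale invariance is also delicate when $\sigma>0$, since rescaling one MRV moves its location from $\sigma$ to $c\sigma$, so the two laws no longer lie in your family with common $\sigma$; the paper's Case II faces a similar subtlety, but in your set-up the entire argument rests on this reduction.)

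The paper avoids the obstacle by parameterizing along the majorization direction: passing to reciprocals $\lambda_i^*=1/\lambda_i$ and normalizing $\lambda_1^*+\lambda_2^*=1$, it works with $F_{\lambda^*}(x)=n_1r_1F\bigl((x-\sigma)(1-\lambda^*)\bigr)+n_2r_2F\bigl((x-\sigma)\lambda^*\bigr)$, so both scales move simultaneously and $F'_{\lambda^*}(x)=(x-\sigma)\bigl[-n_1r_1f\bigl((x-\sigma)(1-\lambda^*)\bigr)+n_2r_2f\bigl((x-\sigma)\lambda^*\bigr)\bigr]$ is \emph{negative} precisely because $f$ is decreasing (concavity) and $n_1r_1\ge n_2r_2$. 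Then $\tfrac{F'_{\lambda^*}(x)}{xf_{\lambda^*}(x)}=\tfrac{x-\sigma}{x}\,G(x)$ with $G\le0$, and an increasing positive factor times a decreasing negative factor is automatically decreasing; the whole problem reduces to showing that the single ratio $L(x)=n_2r_2f\bigl((x-\sigma)\lambda^*\bigr)/\bigl(n_1r_1f\bigl((x-\sigma)(1-\lambda^*)\bigr)\bigr)$ is decreasing, which follows at once from IPLR, with Case II handled by a separate scaling argument. That sign structure is exactly what your one-sided family discards, so to salvage your route you would need either to switch to the paper's simplex parameterization or to prove the two-member star comparison by a method other than Saunders' lemma.
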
	

	\begin{proof}
		The sf of $U_{\boldsymbol{\lambda}}$ is given in (\ref{eq_sfU}), and the sf of $V_{\boldsymbol{\theta}}$ can be derived also from (\ref{eq_sfU}) by replacing $\lambda_i$ with $\theta_i$ for $i=1,2$. Let $x>\sigma$. Within this subinterval, the sfs of both $U_{\boldsymbol{\lambda}}$ and $V_{\boldsymbol{\theta}}$ can be written as    
		\begin{eqnarray}\label{eq3.43}
		\bar{F}_{U_{\boldsymbol{\lambda}}}(x)=1-\left[n_1 r_1 F\left((x-\sigma)\lambda_1^*\right)+n_2 r_2 F\left((x-\sigma)\lambda_2^*\right)\right]   
		\end{eqnarray}
		and
		\begin{eqnarray}\label{eq3.44}
		\bar{F}_{V_{\boldsymbol{\theta}}}(x)=1-\left[n_1 r_1 F\left((x-\sigma)\theta_1^*\right)+n_2 r_2 F\left((x-\sigma)\theta_2^*\right)\right],   
		\end{eqnarray}
		where $\lambda_1^*=1/\lambda_1$, $\lambda_2^*=1/\lambda_2$, $\theta_1^*=1/\theta_1$, and $\theta_2^*=1/\theta_2$, respectively.
		To obtain the required result, we have to consider two cases.\\
		{\bf Case-I }: $\lambda_1^*+\lambda_2^*=\theta_1^*+\theta_2^*$.\\
	For convenience, let us assume that $\lambda_1^* + \lambda_2^* = \theta_1^* + \theta_2^* = 1$. In this scenario, it is evident that $(\lambda_1^*,\lambda_2^*)\stackrel{m}{\succcurlyeq}(\theta_1^*,\theta_2^*)$. Now, consider $\lambda^* = \lambda_2^* \geq \lambda_1^*$ and $\theta^* = \theta_2^* \geq \theta_1^*$. Consequently, $\lambda_1^* = 1 - \lambda^*$ and $\theta_1^* = 1 - \theta^*$. With this arrangement, the distribution functions of $U_{\boldsymbol{\lambda}}$ and $V_{\boldsymbol{\theta}}$ can be expressed in the following form:
		\begin{eqnarray}\label{eq3.45}
		F_{U_{\boldsymbol{\lambda}}}(x)\overset{def}{=}F_{\lambda^*}(x)=n_1 r_1 F\left((x-\sigma)(1-\lambda^*)\right)+n_2 r_2 F\left((x-\sigma)\lambda^*\right)
		\end{eqnarray}
		and
		\begin{eqnarray}\label{eq3.46}
		F_{V_{\boldsymbol{\theta}}}(x)\overset{def}{=}F_{\theta^*}(x)=n_1 r_1 F\left((x-\sigma)(1-\theta^*)\right)+n_2 r_2 F\left((x-\sigma)\theta^*\right).
		\end{eqnarray}
		Now, according to Lemma \ref{lemma3.1}(i), we have to show that $\frac{F^{\prime}_{\lambda^*}(x)}{xf_{\lambda^*}(x)}$ is decreasing in $x\in \mathbb{R}^+$, for $\lambda^*\in(1/2,1]$. The derivative of $F_{\lambda^*}(x)$ with respect to $\lambda^*$ is given by
		\begin{eqnarray}\label{eq3.47}
		F^{\prime}_{\lambda^*}(x)=
		(x-\sigma)\left[-n_1 r_1 f\left((x-\sigma)(1-\lambda^*)\right)+n_2 r_2 f\left((x-\sigma)\lambda^*\right)\right].
		\end{eqnarray}
		Also, the pdf corresponding to $F_{\lambda^*}(x)$ is
		\begin{eqnarray}\label{eq3.48}
		f_{\lambda^*}(x)=n_1 r_1 (1-\lambda^*)f\left((x-\sigma)(1-\lambda^*)\right)+n_2 r_2\lambda^* f\left((x-\sigma)\lambda^*\right).
		\end{eqnarray}
		Therefore,
		\begin{eqnarray}\label{eq3.49}
		\frac{F^{\prime}_{\lambda^*}(x)}{xf_{\lambda^*}(x)}&=&\frac{1}{x}\frac{(x-\sigma)\left[-n_1 r_1 f\left((x-\sigma)(1-\lambda^*)\right)+n_2 r_2 f\left((x-\sigma)\lambda^*\right)\right]}{n_1 r_1 (1-\lambda^*)f\left((x-\sigma)(1-\lambda^*)\right)+n_2 r_2\lambda^* f\left((x-\sigma)\lambda^*\right)}\nonumber\\
		&=&\frac{x-\sigma}{x}\left[\frac{n_1 r_1 (1-\lambda^*)f\left((x-\sigma)(1-\lambda^*)\right)+n_2 r_2\lambda^* f\left((x-\sigma)\lambda^*\right)}{-n_1 r_1 f\left((x-\sigma)(1-\lambda^*)\right)+n_2 r_2 f\left((x-\sigma)\lambda^*\right)}\right]^{-1}\nonumber\\
		&=&\frac{x-\sigma}{x}\left[\frac{\lambda^*\left\lbrace n_2 r_2 f\left((x-\sigma)\lambda^*\right)-n_1 r_1 f\left((x-\sigma)(1-\lambda^*)\right)   \right\rbrace+n_1 r_1 f\left((x-\sigma)(1-\lambda^*)\right)  }{n_2 r_2 f\left((x-\sigma)\lambda^*\right)-n_1 r_1 f\left((x-\sigma)(1-\lambda^*)\right)  }\right]^{-1}\nonumber\\
		&=&\frac{x-\sigma}{x}\left[\lambda^*+\frac{n_1 r_1f((x-\sigma)(1-\lambda^*))}{n_2 r_2f((x-\sigma)\lambda^*)-n_1 r_1f((x-\sigma)(1-\lambda^*))}
		\right]^{-1}\nonumber\\
		&=&\frac{x-\sigma}{x}\left[\lambda^*+\left\lbrace \frac{n_2 r_2 f((x-\sigma)\lambda^*)}{n_1 r_1f((x-\sigma)(1-\lambda^*))}-1\right\rbrace ^{-1}\right]^{-1}.
		\end{eqnarray}
		Here $(x-\sigma)/x$ is increasing in $x>0$ and $F^{\prime}_{\lambda^*}$ is negative valued as $f$ is decreasing, $n_2\leq n_1$ and $r_2\leq r_1$. 
		Thus, it suffices to show that the function
		\[L(x)=\frac{n_2 r_2f((x-\sigma)\lambda^*)}{n_1r_1f((x-\sigma)(1-\lambda^*))}\]
		is decreasing in $x\in\mathbb{R}^+$, for $\lambda^*\in(1/2,1]$.
		The derivative of $L(x)$ with respect to $x$ is obtained as
		\begin{eqnarray}\label{eq3.50}
		L^{\prime}&\stackrel{sign}{=}&\frac{\lambda^* f\left((x-\sigma)(1-\lambda^*)\right) f^{\prime}\left((x-\sigma)\lambda^*\right)-(1-\lambda^*) f\left((x-\sigma)\lambda^*\right)f^{\prime}\left((x-\sigma)(1-\lambda^*)\right)}{\left(f((x-\sigma)(1-\lambda^*))\right)^2}\nonumber\\
		&\overset{sign}{=}&\frac{\lambda^* f^{\prime}((x-\sigma)\lambda^*)}{f((x-\sigma)\lambda^*)}-\frac{(1-\lambda^*) f^{\prime}((x-\sigma)(1-\lambda^*))}{f((x-\sigma)(1-\lambda^*))}.
		\end{eqnarray}
		Under the assumptions made, $\frac{x f^{\prime}(x)}{f(x)}$ is decreasing in $x>0$. Therefore, for $\lambda^*\in(1/2,1]$, we have
		\begin{eqnarray}\label{eq3.51}
		\frac{\lambda^* f^{\prime}((x-\sigma)\lambda^*)}{f((x-\sigma)\lambda^*)}\leq\frac{(1-\lambda^*) f^{\prime}((x-\sigma)(1-\lambda^*))}{f((x-\sigma)(1-\lambda^*))}.
		\end{eqnarray}
	Now, by substituting (\ref{eq3.51}) into (\ref{eq3.50}), we obtain $L^{\prime}(x) \leq 0$ for $x \in \mathbb{R}^+$, indicating that $L(x)$ is decreasing for $x > 0$. Consequently, $\frac{F^{\prime}_{\lambda^*}(x)}{x f_{\lambda^*}(x)}$ is a decreasing function for $x > 0$. Therefore, we deduce $U_{\boldsymbol{\lambda}} \geq_{*} V_{\boldsymbol{\theta}}$ based on Lemma \ref{lemma3.1}(i).\\
		{\bf Case-II}. $\lambda_1^*+\lambda_2^*\neq\theta_1^*+\theta_2^*$.\\ 
		In this case, we can consider $\lambda_1^* + \lambda_2^* = k(\theta_1^* + \theta_2^*)$, where $k > 0$ is a scalar. Consequently, $(k\theta_1^*, k\theta_2^*)\stackrel{m}{\preccurlyeq}(\lambda_1^*, \lambda_2^*)$. Let us now examine $n$ independent nonnegative random variables, such that $Z_i \thicksim F(k(x-\sigma)\theta_1^*)$ for $i=1,\ldots,n_1$ and $Z_j \thicksim F(k(x-\sigma)\theta_2^*)$ for $j=n_1+1,\ldots,n$, where $n_1 + n_2 = n$. Then,		
		from {\bf Case-I}, it follows that $U_{\boldsymbol{\lambda}} \geq_{*} Z_{\boldsymbol{\theta}}$. Moreover, the star order is scale-invariant, leading to $U_{\boldsymbol{\lambda}} \geq_{*} V_{\boldsymbol{\theta}}$. This concludes the proof of this result.
	\end{proof}

   The following corollary is a direct consequence of Theorem \ref{theorem4.6} due to the relationship between the star order and the Lorenz order.

	\begin{corollary}
		Under Set-up \ref{setup4.1}, let $n_i=n_i^*$ and $\sigma_1=\sigma_2=\mu_1=\mu_2=\sigma$. Assume that $n_1\geq n_2$ and $r_1\geq r_2$. Then, for $\boldsymbol{\lambda},\boldsymbol{\theta}\in\mathcal{D}_2^+$ and fixed $\sigma>0$, we have 
		\begin{eqnarray*}
			\frac{\lambda_{1:2}}{\lambda_{2:2}}\leq\frac{\theta_{1:2}}{\theta_{2:2}}\Rightarrow U_{\boldsymbol{\lambda}}\geq_{Lorenz}V_{\boldsymbol{\theta}},
		\end{eqnarray*}
		provided that $F$ is concave and IPLR.     	
	\end{corollary}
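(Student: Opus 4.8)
The plan is to derive the corollary directly from Theorem \ref{theorem4.6} by invoking the implication chain $X\leq_{*}Y\Rightarrow X\leq_{Lorenz}Y$ recalled just after Definition 2.2. First I would observe that the hypotheses of the corollary are a special case of those of Theorem \ref{theorem4.6}: indeed, if $n_1\geq n_2$ and $r_1\geq r_2$, then multiplying these two inequalities of nonnegative quantities gives $n_1 r_1\geq n_2 r_2$, which is exactly the assumption on the mixing weights needed in Theorem \ref{theorem4.6}. All the remaining hypotheses --- $n_i=n_i^*$, $\sigma_1=\sigma_2=\mu_1=\mu_2=\sigma$, $\boldsymbol{\lambda},\boldsymbol{\theta}\in\mathcal{D}_2^+$, $\sigma>0$ fixed, and $F$ concave and IPLR --- are identical to those of the theorem.

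Next I would apply Theorem \ref{theorem4.6} to conclude that $\frac{\lambda_{1:2}}{\lambda_{2:2}}\leq\frac{\theta_{1:2}}{\theta_{2:2}}$ implies $U_{\boldsymbol{\lambda}}\geq_{*}V_{\boldsymbol{\theta}}$. Then, using the well-known implication $X\leq_{*}Y\Rightarrow X\leq_{Lorenz}Y$ (stated in the paragraph following Definition 2.2), the star ordering $V_{\boldsymbol{\theta}}\leq_{*}U_{\boldsymbol{\lambda}}$ yields $V_{\boldsymbol{\theta}}\leq_{Lorenz}U_{\boldsymbol{\lambda}}$, i.e. $U_{\boldsymbol{\lambda}}\geq_{Lorenz}V_{\boldsymbol{\theta}}$, which is the desired conclusion. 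One should note in passing that the Lorenz order requires the relevant expectations $E(U_{\boldsymbol{\lambda}})$ and $E(V_{\boldsymbol{\theta}})$ to be finite; this is implicitly part of the setting, and since the star order already presupposes finiteness of these means in the standard theory, no extra work is needed here.

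There is essentially no obstacle in this proof --- it is a one-line deduction from the preceding theorem combined with a standard implication between stochastic orders. The only point requiring a small verification is the reduction of the weight hypothesis, namely that the product inequality $n_1 r_1\geq n_2 r_2$ follows from $n_1\geq n_2$ and $r_1\geq r_2$ with all quantities nonnegative, which is immediate. Thus the proof consists of citing Theorem \ref{theorem4.6} and the star-to-Lorenz implication.
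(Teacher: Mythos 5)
Your proposal is correct and matches the paper's argument: the corollary is stated there as a direct consequence of Theorem \ref{theorem4.6} combined with the implication $X\leq_{*}Y\Rightarrow X\leq_{Lorenz}Y$. Your extra remark that $n_1\geq n_2$ and $r_1\geq r_2$ (nonnegative) give $n_1r_1\geq n_2r_2$ is the right reduction of the weight hypothesis and is exactly what makes Theorem \ref{theorem4.6} applicable.
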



In the subsequent result, we investigate the dispersive order between the two MRVs $U_{\boldsymbol{\lambda}}$ and $V_{\boldsymbol{\theta}}$ under the condition that the number of random variables in the multiple-outlier models is the same, i.e., $n_i=n_i^*$ for $i=1,2$.

\begin{theorem}\label{theorem4.7}
Under Set-up \ref{setup4.1}, let $n_i=n_i^*$ and $\sigma_1=\sigma_2=\mu_1=\mu_2=\sigma$. Assume that $n_1r_1\geq n_2r_2$. Then, for $\boldsymbol{\lambda},\boldsymbol{\theta}\in\mathcal{D}_2^+$ and fixed $\sigma>0$, we have 
\begin{eqnarray*}
	\frac{\lambda_{1:2}}{\lambda_{2:2}}\leq\frac{\theta_{1:2}}{\theta_{2:2}}\Rightarrow U_{\boldsymbol{\lambda}}\geq_{disp}V_{\boldsymbol{\theta}},
\end{eqnarray*}
provided that $F$ is concave and IPLR.     
\end{theorem}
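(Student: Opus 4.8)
The plan is to mirror the proof of Theorem \ref{theorem4.6} almost verbatim, replacing the star-order criterion of Lemma \ref{lemma3.1}(i) by the dispersive-order criterion of Lemma \ref{lemma3.1}(ii). As there, the proof splits into the balanced and the unbalanced case, and the unbalanced case reduces to the balanced one by a scaling argument; I first record that the reduction is legitimate because the dispersive order is invariant under a common positive scale change. Concretely, in the unbalanced situation $\lambda_1^*+\lambda_2^*\neq\theta_1^*+\theta_2^*$ (with $\lambda_i^*=1/\lambda_i$, $\theta_i^*=1/\theta_i$), write $\lambda_1^*+\lambda_2^*=k(\theta_1^*+\theta_2^*)$ for a scalar $k>0$, so that $\lambda_{1:2}/\lambda_{2:2}\le\theta_{1:2}/\theta_{2:2}$ gives $(k\theta_1^*,k\theta_2^*)\stackrel{m}{\preccurlyeq}(\lambda_1^*,\lambda_2^*)$, and it suffices to treat the balanced case; after rescaling I may assume $\lambda_1^*+\lambda_2^*=\theta_1^*+\theta_2^*=1$.

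In the balanced case, set $\lambda^*=\max\{\lambda_1^*,\lambda_2^*\}=\lambda_2^*\in(1/2,1]$ (so $\lambda_1^*=1-\lambda^*$) and likewise $\theta^*\in(1/2,1]$; since $\sigma_1=\sigma_2=\mu_1=\mu_2=\sigma$, both MRVs are supported on $(\sigma,\infty)$ and, by \eqref{eq_sfU}, belong to the one-parameter family $\{F_{t}\}_{t\in(1/2,1]}$ with $F_{t}(x)=n_1r_1F((x-\sigma)(1-t))+n_2r_2F((x-\sigma)t)$, i.e.\ $F_{U_{\boldsymbol\lambda}}=F_{\lambda^*}$ and $F_{V_{\boldsymbol\theta}}=F_{\theta^*}$; moreover $\lambda_{1:2}/\lambda_{2:2}\le\theta_{1:2}/\theta_{2:2}$ is equivalent to $\lambda^*\ge\theta^*$. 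By Lemma \ref{lemma3.1}(ii), to obtain $F_{\theta^*}\le_{disp}F_{\lambda^*}$ — that is, $U_{\boldsymbol\lambda}\ge_{disp}V_{\boldsymbol\theta}$, using $\theta^*\le\lambda^*$ — it suffices to show that $F'_{\lambda^*}(x)/f_{\lambda^*}(x)$ is decreasing in $x>\sigma$, where the prime denotes the derivative in the parameter $\lambda^*$. Using the expressions for $F'_{\lambda^*}$ and $f_{\lambda^*}$ already computed in the proof of Theorem \ref{theorem4.6} and the same algebraic rearrangement, but now \emph{without} the extra factor $1/x$, one gets
\begin{equation*}
\frac{F'_{\lambda^*}(x)}{f_{\lambda^*}(x)}=(x-\sigma)\left[\lambda^*+\bigl(L(x)-1\bigr)^{-1}\right]^{-1},\qquad
L(x)=\frac{n_2r_2\,f\left((x-\sigma)\lambda^*\right)}{n_1r_1\,f\left((x-\sigma)(1-\lambda^*)\right)}.
\end{equation*}

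Because $F$ is concave (so $f$ is decreasing), $\lambda^*\ge 1/2$ and $n_1r_1\ge n_2r_2$, we have $L(x)<1$ and $F'_{\lambda^*}(x)\le 0$, so the bracketed factor above is negative; in particular $\lambda^*+(L(x)-1)^{-1}<0$. Moreover, the proof of Theorem \ref{theorem4.6} shows, using that $F$ is IPLR (i.e.\ $xf'(x)/f(x)$ is decreasing), that $L$ is decreasing in $x$; hence $(L(x)-1)^{-1}$ is increasing, $\lambda^*+(L(x)-1)^{-1}$ is negative and increasing, and therefore $\left[\lambda^*+(L(x)-1)^{-1}\right]^{-1}$ is negative and decreasing. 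Multiplying this negative decreasing factor by the positive increasing factor $(x-\sigma)$ yields a decreasing function, so $F'_{\lambda^*}(x)/f_{\lambda^*}(x)$ is decreasing in $x$, and Lemma \ref{lemma3.1}(ii) gives the result. The computational heart is thus inherited from Theorem \ref{theorem4.6}; the only genuinely new feature is that the bounded factor $(x-\sigma)/x$ is replaced by the unbounded factor $(x-\sigma)$, which is still positive and increasing on $(\sigma,\infty)$, so the monotonicity argument still goes through. I expect the main thing to handle with care to be this chain of sign-sensitive monotonicity deductions through the successive reciprocals — each intermediate quantity being negative, one must push "$L$ decreasing" through the inversions in the right direction — together with confirming the scale-invariance of $\le_{disp}$ used in the first-step reduction.
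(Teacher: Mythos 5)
Your Case I is sound and is exactly the argument the paper has in mind (the paper omits the proof of Theorem \ref{theorem4.7}, stating only that it is similar to Theorem \ref{theorem4.6}): dropping the factor $1/x$, writing $F'_{\lambda^*}(x)/f_{\lambda^*}(x)=(x-\sigma)\bigl[\lambda^*+(L(x)-1)^{-1}\bigr]^{-1}$, using concavity and $n_1r_1\ge n_2r_2$ to get $L\le 1$ and the negativity of the bracket, using IPLR to get $L$ decreasing, and pushing the monotonicity through the two reciprocals in the correct (sign-sensitive) direction before applying Lemma \ref{lemma3.1}(ii) with $\theta^*\le\lambda^*$ — all of that is correct.

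The genuine gap is in Case II, precisely at the point you yourself flagged. The reduction is \emph{not} a common scale change of the two variables being compared: replacing $(\theta_1^*,\theta_2^*)$ by $(k\theta_1^*,k\theta_2^*)$ leaves $U_{\boldsymbol{\lambda}}$ untouched and replaces $V_{\boldsymbol{\theta}}$ by a variable $Z$ with $Z-\sigma=(V_{\boldsymbol{\theta}}-\sigma)/k$. Rescaling just one of the two variables is harmless for the star order (which is why this step works in Theorem \ref{theorem4.6}), but the dispersive order is only location-invariant, not invariant under rescaling a single variable. From Case I you obtain $U_{\boldsymbol{\lambda}}\geq_{disp}Z$, and since the quantile spreads of $V_{\boldsymbol{\theta}}$ are $k$ times those of $Z$, this gives $U_{\boldsymbol{\lambda}}\geq_{disp}V_{\boldsymbol{\theta}}$ only when $k\le 1$, i.e.\ when $1/\lambda_1+1/\lambda_2\le 1/\theta_1+1/\theta_2$; for $k>1$ the last step goes the wrong way. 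The failure is not repairable by any invariance argument, because the unbalanced statement itself is false under the stated hypotheses: take $F$ exponential (concave and IPLR), $n_1=n_2=1$, $r_1=r_2=1/2$, $\boldsymbol{\lambda}=(1,1)$, $\boldsymbol{\theta}=(100,100)$, so that $\lambda_{1:2}/\lambda_{2:2}=1\le 1=\theta_{1:2}/\theta_{2:2}$, yet $V_{\boldsymbol{\theta}}\stackrel{st}{=}\sigma+100E$ is strictly more dispersed than $U_{\boldsymbol{\lambda}}\stackrel{st}{=}\sigma+E$. So your proof (like the paper's appeal to similarity with Theorem \ref{theorem4.6}) establishes the result only in the balanced case $\lambda_1^*+\lambda_2^*=\theta_1^*+\theta_2^*$, or more generally under the extra condition $1/\lambda_1+1/\lambda_2\le 1/\theta_1+1/\theta_2$; without such a condition the scale-sensitivity of $\leq_{disp}$ cannot be circumvented.
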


\begin{proof}
The proof is similar to that of Theorem \ref{theorem4.6}. Thus, it is omitted for the sake of brevity.  
\end{proof}


The following result asserts that the right spread ordering holds for two MRVs, $U_{\boldsymbol{\lambda}}$ and $V_{\boldsymbol{\theta}}$ under some sufficient conditions. In this context, the samples are drawn from multiple-outlier independent location-scale models, specifically when the number of random variables is the same, i.e., $n_i=n_i^*$ for $i=1,2$.

\begin{theorem}\label{theorem4.8}
Under Set-up \ref{setup4.1}, let $n_i=n_i^*$ and $\sigma_1=\sigma_2=\mu_1=\mu_2=\sigma$. Assume that $n_1r_1\leq n_2r_2$. Then, for $\boldsymbol{\lambda},\boldsymbol{\theta}\in\mathcal{E}_2^+$ and fixed $\sigma>0$, we have 
\begin{eqnarray*}
	\frac{\lambda_{1:2}}{\lambda_{2:2}}\leq\frac{\theta_{1:2}}{\theta_{2:2}}\Rightarrow U_{\boldsymbol{\lambda}}\leq_{RS}V_{\boldsymbol{\theta}},
\end{eqnarray*}
provided that $f(t)$ is decreasing in $t>0$.	
\end{theorem}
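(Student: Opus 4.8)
The plan is to reduce the claim to the criterion of Lemma \ref{lemma4.3}, following the same route as the proof of Theorem \ref{theorem4.6}. Since $\sigma_1=\sigma_2=\mu_1=\mu_2=\sigma$, the survival functions in \eqref{eq_sfU} collapse: for $x>\sigma$,
\[
\bar{F}_{U_{\boldsymbol{\lambda}}}(x)=n_1r_1\bar{F}\!\left(\tfrac{x-\sigma}{\lambda_1}\right)+n_2r_2\bar{F}\!\left(\tfrac{x-\sigma}{\lambda_2}\right),
\]
with the analogous expression for $V_{\boldsymbol{\theta}}$ (replace $\lambda_i$ by $\theta_i$), while $\bar{F}_{U_{\boldsymbol{\lambda}}}(x)=\bar{F}_{V_{\boldsymbol{\theta}}}(x)=1$ for $x\leq\sigma$; in particular both MRVs share the common support $(\sigma,\infty)\subseteq(0,\infty)$ demanded by Lemma \ref{lemma4.3}. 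As in Theorem \ref{theorem4.6}, I would split into two cases, the second reducing to the first by the scale invariance of the right spread order. In Case~I, $\lambda_1+\lambda_2=\theta_1+\theta_2$; normalizing this common sum to $1$ (scaling both vectors), the hypothesis $\lambda_{1:2}/\lambda_{2:2}\leq\theta_{1:2}/\theta_{2:2}$ becomes $\lambda_1\leq\theta_1\leq 1/2$, and I would embed both models into the one-parameter family with survival function $\bar{G}_{\lambda}(x)=n_1r_1\bar{F}\!\left(\tfrac{x-\sigma}{\lambda}\right)+n_2r_2\bar{F}\!\left(\tfrac{x-\sigma}{1-\lambda}\right)$, $\lambda\in(0,1/2]$, so that $U_{\boldsymbol{\lambda}}=G_{\lambda_1}$, $V_{\boldsymbol{\theta}}=G_{\theta_1}$ with $\lambda_1\leq\theta_1$. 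By Lemma \ref{lemma4.3} it then suffices to show that $W_{\lambda}'(x)/\bar{G}_{\lambda}(x)$ is increasing in $x$, where $W_{\lambda}(x)=\int_x^{\infty}\bar{G}_{\lambda}(u)\,du$ and the prime denotes $\partial/\partial\lambda$.

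Next I would make this criterion explicit. Writing $\bar{\Phi}(t)=\int_t^{\infty}\bar{F}(v)\,dv$ and $t_1=(x-\sigma)/\lambda$, $t_2=(x-\sigma)/(1-\lambda)$, a change of variables gives $W_{\lambda}(x)=n_1r_1\lambda\,\bar{\Phi}(t_1)+n_2r_2(1-\lambda)\bar{\Phi}(t_2)$ for $x>\sigma$, and differentiating in $\lambda$ (using $\bar{\Phi}'=-\bar{F}$) yields a closed form for $W_{\lambda}'(x)$ in terms of $\bar{\Phi},\bar{F},f$ evaluated at $t_1,t_2$; for $x\leq\sigma$ one checks that $W_{\lambda}'(x)/\bar{G}_{\lambda}(x)$ equals the constant $(n_1r_1-n_2r_2)\bar{\Phi}(0)$, which agrees with the value of the ratio at $x=\sigma^{+}$, so the only genuine work is on $(\sigma,\infty)$. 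There, differentiating the ratio $W_{\lambda}'(x)/\bar{G}_{\lambda}(x)$ and clearing denominators, its sign reduces to an inequality among $\bar{F}(t_1),\bar{F}(t_2),f(t_1),f(t_2),\bar{\Phi}(t_1),\bar{\Phi}(t_2)$, the weights $n_1r_1\leq n_2r_2$, and the ratio $c=(1-\lambda)/\lambda\geq 1$; here the hypothesis that $f$ is decreasing on $(0,\infty)$, which also forces $\bar{F}$ and $\bar{\Phi}$ to be decreasing and $f(t_1)\leq f(t_2)$, is precisely what controls the sign. Case~II, where $\lambda_1+\lambda_2\neq\theta_1+\theta_2$, is treated as in Theorem \ref{theorem4.6}: write $\theta_1+\theta_2=k(\lambda_1+\lambda_2)$, compare $U_{\boldsymbol{\lambda}}$ with the MRV built from the rescaled parameters $(k\lambda_1,k\lambda_2)$ via Case~I (the ratio condition is preserved, and $k\lambda_1\le\theta_1$), and invoke scale invariance of $\leq_{RS}$.

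I expect the bottleneck to be the monotonicity verification in Case~I. Unlike the $\mathcal{D}_2^+$, concave-plus-IPLR setting of Theorems \ref{theorem4.6}--\ref{theorem4.7}, here one works with the integrated tail $\bar{\Phi}$, so the derivative of $W_{\lambda}'/\bar{G}_{\lambda}$ produces several competing terms that do not obviously share a sign; the argument must combine $f$ decreasing, $c\geq 1$ and $n_1r_1\leq n_2r_2$ carefully to close it. A secondary, minor point is to confirm that the family $\{G_{\lambda}\}$ satisfies the regularity hypotheses of Lemma \ref{lemma4.3} (common support $(\sigma,\infty)$, density not vanishing on subintervals) and that the ratio glues continuously at the kink $x=\sigma$.
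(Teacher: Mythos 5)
Your set-up matches the paper's strategy (reduce to Lemma \ref{lemma4.3}, split into the equal-sum and unequal-sum cases, close Case~II by scale invariance of $\leq_{RS}$), but the decisive step is missing: you never actually verify that $W_{\lambda}'(x)/\bar G_{\lambda}(x)$ is increasing in $x$; you only announce that the hypotheses ``control the sign'' and flag it as the bottleneck. That verification \emph{is} the proof, and with your normalization it does not close in the direct way you hope. Indeed, in your family $\bar G_{\lambda}(x)=n_1r_1\bar F\left(\frac{x-\sigma}{\lambda}\right)+n_2r_2\bar F\left(\frac{x-\sigma}{1-\lambda}\right)$ one gets $\frac{\partial}{\partial x}W_{\lambda}'(x)\stackrel{sign}{=}\frac{n_2r_2}{(1-\lambda)^2}f\left(\frac{x-\sigma}{1-\lambda}\right)-\frac{n_1r_1}{\lambda^{2}}f\left(\frac{x-\sigma}{\lambda}\right)$, and while $f$ decreasing and $n_1r_1\leq n_2r_2$ push this the right way, the factor $1/\lambda^{2}\geq 1/(1-\lambda)^{2}$ pushes the wrong way, so the sign is not determined by the stated assumptions; this is exactly why you end up with ``several competing terms'' involving $\bar\Phi$, $\bar F$ and $f$ and no conclusion.

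The paper avoids this by normalizing the \emph{reciprocal} scale parameters: with $\lambda_i^{*}=1/\lambda_i$, $\theta_i^{*}=1/\theta_i$ and $\lambda_1^{*}+\lambda_2^{*}=\theta_1^{*}+\theta_2^{*}=1$, the ratio condition becomes $\lambda^{*}\leq\theta^{*}$ with $\lambda^{*}\in(0,1/2]$, and the one-parameter family is $F_{\lambda^{*}}(x)=n_1r_1F\left((x-\sigma)(1-\lambda^{*})\right)+n_2r_2F\left((x-\sigma)\lambda^{*}\right)$. Then $\frac{\partial}{\partial x}W_{\lambda^{*}}'(x)=(x-\sigma)\left[n_2r_2f\left((x-\sigma)\lambda^{*}\right)-n_1r_1f\left((x-\sigma)(1-\lambda^{*})\right)\right]\geq 0$ follows in one line from $f$ decreasing (so $f((x-\sigma)\lambda^{*})\geq f((x-\sigma)(1-\lambda^{*}))$ since $\lambda^{*}\leq 1-\lambda^{*}$) together with $n_1r_1\leq n_2r_2$; the paper then combines the monotonicity of $W_{\lambda^{*}}'$ in $x$ with the decreasingness of $\bar F_{\lambda^{*}}$ to conclude that $W_{\lambda^{*}}'/\bar F_{\lambda^{*}}$ is increasing, and no integrated tail $\bar\Phi$ is needed at all. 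So to repair your argument you should either switch to the reciprocal parametrization (in which Case~II is handled exactly as you describe, with $k$ defined through the sums of the $\lambda_i^{*}$'s) or explicitly prove the sign inequality you left open; as written, the core analytic step of the theorem is absent. Your boundary observation at $x\leq\sigma$ and the regularity remarks for Lemma \ref{lemma4.3} are fine but peripheral.
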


\begin{proof}
Note that the sf of $U_{\boldsymbol{\lambda}}$ is given in (\ref{eq_sfU}). The sf of $V_{\boldsymbol{\theta}}$ can be derived easily from (\ref{eq_sfU}) by replacing $\lambda_i$ with $\theta_i$ for $i=1,2$. Consider $x>\sigma$. Within this subinterval, the sfs of both $U_{\boldsymbol{\lambda}}$ and $V_{\boldsymbol{\theta}}$ can be respectively written as    
\begin{eqnarray}\label{eq-4.29}
\bar{F}_{U_{\boldsymbol{\lambda}}}(x)=1-\left[n_1 r_1 F\left((x-\sigma)\lambda_1^*\right)+n_2 r_2 F\left((x-\sigma)\lambda_2^*\right)\right]   
\end{eqnarray}
and
\begin{eqnarray}\label{eq-4.30}
\bar{F}_{V_{\boldsymbol{\theta}}}(x)=1-\left[n_1 r_1 F\left((x-\sigma)\theta_1^*\right)+n_2 r_2 F\left((x-\sigma)\theta_2^*\right)\right],   
\end{eqnarray}
where $\lambda_1^*=1/\lambda_1$, $\lambda_2^*=1/\lambda_2$, $\theta_1^*=1/\theta_1$, and $\theta_2^*=1/\theta_2$.
Now, to prove the result, we consider two cases, presented below.\\
{\bf Case-I }: $\lambda_1^*+\lambda_2^*=\theta_1^*+\theta_2^*$.\\
For simplicity, we take $\lambda_1^* + \lambda_2^* = \theta_1^* + \theta_2^* = 1$. Thus, clearly $(\lambda_1^*,\lambda_2^*)\stackrel{m}{\succcurlyeq}(\theta_1^*,\theta_2^*)$. Now, consider $\lambda^* = \lambda_2^* \geq \lambda_1^*$ and $\theta^* = \theta_2^* \geq \theta_1^*$. Consequently, $\lambda_1^* = 1 - \lambda^*$ and $\theta_1^* = 1 - \theta^*$. With this arrangement, the cdfs of $U_{\boldsymbol{\lambda}}$ and $V_{\boldsymbol{\theta}}$ can be expressed in the following form:		
\begin{eqnarray}\label{eq-4.31}
F_{U_{\boldsymbol{\lambda}}}(x)\overset{def}{=}F_{\lambda^*}(x)=n_1 r_1 F\left((x-\sigma)(1-\lambda^*)\right)+n_2 r_2 F\left((x-\sigma)\lambda^*\right)
\end{eqnarray}
and
\begin{eqnarray}\label{eq-4.32}
F_{V_{\boldsymbol{\theta}}}(x)\overset{def}{=}F_{\theta^*}(x)=n_1 r_1 F\left((x-\sigma)(1-\theta^*)\right)+n_2 r_2 F\left((x-\sigma)\theta^*\right).
\end{eqnarray}
From (\ref{eq-4.31}), we obtain  
\begin{eqnarray}\label{eq-4.33}
\bar{F}_{\lambda^*}(x)=n_1r_1\bar{F}((x-\sigma)(1-\lambda^*))+n_2r_2\bar{F}((x-\sigma)\lambda^*),
\end{eqnarray}
where $\lambda^*\leq 1-\lambda^*$ and $\lambda^*\in(0,1/2]$. Similarly, from (\ref{eq-4.32}), we get  
\begin{eqnarray}\label{eq-4.34}
\bar{F}_{\theta^*}(x)=n_1r_1\bar{F}((x-\sigma)(1-\theta^*))+n_2r_2\bar{F}((x-\sigma)\theta^*),
\end{eqnarray}
where $\theta^*\leq 1-\theta^*$ and $\theta^*\in(0,1/2]$. Consider $W_{\lambda^*}=\int_{x}^{\infty}\bar{F}_{\lambda^*}(u)du$. The partial derivative of $W_{\lambda^*}$ with respect to $\lambda^*$ is given by  
\begin{eqnarray}\label{eq-4.35}
W^{\prime}_{\lambda^*}=n_1r_1\lim_{c\rightarrow\infty}\int_{x}^{c}(u-\sigma)f((u-\sigma)(1-\lambda^*))du-n_2r_2\lim_{c\rightarrow\infty}\int_{x}^{c}(u-\sigma)f((u-\sigma)\lambda^*)du.
\end{eqnarray}  
Also,
\begin{eqnarray}\label{eq-4.36}
\frac{\partial}{\partial x}\left(W^{\prime}_{\lambda^*}\right)=-n_1r_1(x-\sigma)f((x-\sigma)(1-\lambda^*))+n_2r_2(x-\sigma)f((x-\sigma)\lambda^*). 
\end{eqnarray}
Now, in $\frac{W^{\prime}_{\lambda^*}(x)}{\overline{F}_{\lambda^*}(x)}$, it is obvious that $\bar{F}_{\lambda^*}(x)$ is decreasing in $x>0$. Thus, if we  show that $W^{\prime}_{\lambda^*}(x)$ is increasing in $x>0$, then $\frac{W^{\prime}_{\lambda^*}(x)}{\overline{F}_{\lambda^*}(x)}$ is increasing in $x>0$. Here, we have taken $\lambda^*\in(0,1/2]$. Now, we have $\lambda^*\leq 1-\lambda^*\Rightarrow (x-\sigma)\lambda^*\leq(x-\sigma)(1-\lambda^*)$. 
Under the assumption made, it can be shown that 
\begin{eqnarray}\label{eq-4.37}
(x-\sigma)f((x-\sigma)\lambda^*)\geq(x-\sigma)f((x-\sigma)(1-\lambda^*)).
\end{eqnarray}    
Using $n_2r_2\geq n_1r_1$ in (\ref{eq-4.37}), we obtain  
\begin{eqnarray}\label{eq-4.38}
-n_1r_1(x-\sigma)f((x-\sigma)(1-\lambda^*))+n_2r_2(x-\sigma)f((x-\sigma)\lambda^*)\geq 0,
\end{eqnarray}  
which implies that $\frac{\partial}{\partial x}(W^{\prime}_{\lambda^*}(x))\geq 0$. That is, $W^{\prime}_{\lambda^*}(x)$ is increasing in $x>0$. Thus, $\frac{W^{\prime}_{\lambda^*}(x)}{\overline{F}_{\lambda^*}(x)}$ is increasing with respect to $x>0$. Hence, by Lemma \ref{lemma4.3}, we get the required result.\\ 
{\bf Case-II}. $\lambda_1^*+\lambda_2^*\neq\theta_1^*+\theta_2^*$.\\ 
In this case, we assume that $\lambda_1^* + \lambda_2^* = k(\theta_1^* + \theta_2^*)$, where $k > 0$ is a scalar. Consequently, $(k\theta_1^*, k\theta_2^*)\stackrel{m}{\preccurlyeq}(\lambda_1^*, \lambda_2^*)$. Let us now examine $n$ independent nonnegative random variables, such that $T_i \thicksim F(k(x-\sigma)\theta_1^*)$ for $i=1,\ldots,n_1$ and $T_j \thicksim F(k(x-\sigma)\theta_2^*)$ for $j=n_1+1,\ldots,n$, where $n_1 + n_2 = n$. Then,		
from {\bf Case-I}, it follows that $U_{\boldsymbol{\lambda}} \leq_{RS} T_{\boldsymbol{\theta}}$. Moreover, the right-spread order is scale-invariant, leading to $U_{\boldsymbol{\lambda}} \leq_{RS} V_{\boldsymbol{\theta}}$. This concludes the proof of this result.       
\end{proof}

\section{Concluding remarks}

The study presented in this paper was motivated by the need to delve deeper into the stochastic comparisons among FMMs, particularly focusing on multiple-outlier $\mathcal{LS}$ families of distributions. Building upon prior research on stochastic comparisons for mixture models, we extended our investigation to encompass various univariate orders of magnitude, transform, and variability within the context of these distributions.

The conceptualization of random variables following the $\mathcal{LS}$ family of distributions, defined by the location and scale parameters, allowed us to analyze scenarios with non-negative independent random variables. This framework widened our understanding of stochastic orders, offering insights into their significance across diverse fields, such as reliability analysis, supply chain management, and insurance studies.
Our research findings, outlined in Section 4, contribute nuanced perspectives to the understanding of stochastic comparisons within the context of FMMs adhering to multiple-outlier $\mathcal{LS}$ families of distributions. The established relationships among various orders of magnitude, transforms, and variability provide a richer comprehension of the intricate interplay between subpopulations in heterogeneous data.

In conclusion, this article contributes to the current discourse on stochastic comparisons within the framework of FMMs, offering valuable insights into the intricate nature of heterogeneous populations and their various subpopulations. We believe that the comprehensive analysis performed here paves the way for future research, fostering a deeper understanding of stochastic orders and their implications in modeling real-world data in multidisciplinary domains.


\section*{Funding} 
Raju Bhakta extends sincere thanks for the financial support received from the Indian Institute of Technology Roorkee under a project grant IITR/SRIC/2301/DRD-2236-CSE/23-24/SR-03, sponsored by Centre for Artificial Intelligence \& Robotics (CAIR), DRDO, Bangalore, Government of India. Sangita Das gratefully acknowledges the financial support for this research work under a grant PDF/2022/00471, SERB, Government of India. The author Nuria Torrado states that this manuscript is part of the project TED2021-129813A-I00 and the grant CEX2019-000904-S. She thanks the support of MCIN/AEI/10.13039/501100011033 and the European Union ``NextGenerationEU''/PRTR.

\section*{Disclosure statement}
All the authors state that there is no conflict of interest.

\bibliography{ref}
\end{document}